\newcommand{\mathsym}[1]{{}}
\newtheorem{thm}{Theorem}[section]
\newtheorem{lemma}[thm]{Lemma}
\newtheorem{prop}[thm]{Proposition}
\newtheorem{crl}[thm]{Corollary}
\theoremstyle{definition}
\newtheorem{exm}[thm]{Example}
\newtheorem{rem}[thm]{Remark}
\begin{document}

\title
 {positivstellens\"atze for real function algebras}
 \date{\today}

\author{Murray Marshall}
\author{Tim Netzer}
\address{Department of Mathematics and Statistics,
University of Saskatchewan,
Saskatoon, \newline \indent
SK, S7N 5E6, Canada}
\email{marshall@math.usask.ca}
\address{Fakult\"at f\"ur Mathematik und Informatik, Universit\"at Leipzig, PF 10 09 20 D-04009 \newline \indent Leipzig, Germany}
\email{Tim.Netzer@math.uni-leipzig.de}

\subjclass[2000]{Primary 13J30, 12D15, 14P05, 14P10; Secondary 90C26, 90C22, 44A60}
\keywords{positive polynomials and sums of squares, semi-algebraic functions, semidefinite programming, moment problems}

\begin{abstract} We look for algebraic certificates of positivity for
functions which are not necessarily polynomial functions. Similar questions were examined earlier by Lasserre and Putinar and by Putinar in \cite[Proposition 1]{LP} and \cite[Theorem 2.1]{P}.
We explain how these results can be understood as results on \textit{hidden positivity}: The required positivity of the functions implies their positivity when considered as polynomials on the real variety of the respective algebra of functions.  This variety is however not directly visible in general.

We show how algebras and quadratic modules with this hidden positivity property can be constructed.
We can then use known results, for example Jacobi's representation theorem \cite[Theorem 4]{J}, or the Krivine-Stengle Positivstellensatz \cite[page 25]{M}, to obtain certificates of positivity
relative to a  quadratic module of an algebra of real-valued functions.
Our results go beyond the results of Lasserre and Putinar, for example when dealing with non-continuous functions. The conditions are also easier to check.

We explain the application of our result to various sorts
of real finitely generated algebras of semialgebraic functions. The emphasis is on the case where the quadratic module is also finitely generated. Our results also have application to optimization of real-valued functions, using the semidefinite programming relaxation methods pioneered by Lasserre \cite{L, L1,LP,M}.
\end{abstract}

\maketitle

\section{ Introduction }
 A \textit{Positivstellensatz} is a theorem that relates positivity of certain functions to algebraic representations of these functions. The easiest example is the result that every univariate real polynomial that is nonnegative as a function on $\mathbb{R}$ is a sum of squares of polynomials. Finding such algebraic certificates for nonnegativity is probably the most important question in the area of real algebraic geometry, and has also turned out to be very useful for polynomial optimization.

Usually one considers the $\mathbb{R}$-algebra $A$ of real polynomial functions on some real variety.  Then a subset $K$ of the real points of the variety is specified, and one examines the convex cone of all functions from $A$ that are nonnegative on $K$. The aim is to find an algebraic description of this cone, in terms of sums of squares and some polynomials used in the definition of $K$. One of the most important results is for example Schm\"udgen's Theorem: if $K$ is compact and defined by finitely many simultaneous polynomial inequalities, then every strictly positive polynomial function is obtained by addition and multiplication from sums of squares and the defining polynomials of $K$ \cite{S}. Results by Putinar \cite{P0} and Jacobi \cite{J} generalize this to representations that do not involve multiplication of the defining polynomials.
There are large amounts of further results in that direction, we suggest to the reader to consult \cite{KS, M} or \cite{pd} for detailed information.

In this work we consider a finitely generated algebra $A$ of real valued functions on some set $X$. Then to $A$ there corresponds a real variety $Y$, definable for example as the set of all real algebra homomorphisms on $A$. All the known Positivstellens\"atze can be applied, when considering elements of $A$ as functions on $Y$. But of course the more straightforward approach in this setup is to consider the elements of $A$ as functions on $X$ instead. Indeed $X$ is given directly with the definition of $A$, whereas $Y$ has to be computed first, and may not be obvious at all.  A nice Positivstellensatz should therefore give algebraic certificates for nonnegativity  on $X$, and not for the more hidden nonnegativity on $Y$.

An approach to this question can be found in the works of Lasserre and Putinar  \cite{LP,P}. A thorough analysis  shows that behind their results there is indeed a classical Positivstellensatz. The assumptions in their theorems always allow to obtain the classical positivity  on $Y$ from the assumed positivity on $X$. We will explain this in more detail below. We will then try to examine systematically, under which conditions the obvious positivity on $X$ implies the hidden positivity on $Y$. We can go beyond the results of Lasserre and Putinar, for example when considering non-continuous semialgebraic functions on  $X\subseteq\mathbb{R}^n$.

The formal setup is as follows. Let $A$ be a (unital) $\mathbb{R}$-algebra and let $Q$ be a quadratic module of $A$. We are mainly interested in the case where $A$ is finitely generated as an $\mathbb{R}$-algebra and the quadratic module $Q$ is archimedean and finitely generated as a quadratic module. Here  a \it quadratic module \rm of $A$ is a subset $Q$ of $A$ satisfying $Q+Q\subseteq Q$, $f^2Q\subseteq Q$ for all $f \in A$, and $1\in Q$. $Q$ is said to be \it archimedean \rm if for all $f \in A$ there is an integer $n\ge 1$ such that $n+f \in A$. $\sum A^2$ denotes the set of all finite sums of squares of elements of $A$. $\sum A^2$ is the unique smallest quadratic module of $A$. The quadratic module of $A$ generated by $g_1,\dots,g_s \in A$ is $$\operatorname{QM}(g_1,\dots,g_s) := \sum A^2+\sum A^2g_1+\cdots + \sum A^2 g_s.$$

We are interested in the case where $A$ is an algebra of real valued functions on some set $X$. Typically $X$ is a topological space and the elements of $A$ have some additional properties, e.g., they are continuous or borel measurable.

Actually, we don't need to assume that $A$ is an $\mathbb{R}$-subalgebra of $\mathbb{R}^X$, but only that we have an $\mathbb{R}$-algebra homomorphism $\bar{} : A \rightarrow \mathbb{R}^X$. Also there is no need to assume that $A$ separates points in $X$ although we can reduce to this case, if we want, by replacing $X$ by the set of equivalence classes of $X$ with respect to the equivalence relation $\sim$ on $X$ defined by $x_1 \sim x_2$ iff $\overline{f}(x_1)=\overline{f}(x_2)$ for all $f \in A$.
We denote by $K_{Q,X}$ the nonnegativity set of $Q$ in $X$, i.e., $$K_{Q,X} := \{ x \in X \mid  \bar{g}(x)\ge 0 \ \forall g \in Q \}.$$

Denote by $Y_A$ the set of all (unital) $\mathbb{R}$-algebra homomorphisms $y : A \rightarrow \mathbb{R}$. In case that only one algebra is involved, we just write $Y$ instead of $Y_A$. If we choose finitely many generators $h_1,\dots,h_t$ of $A$, then $Y$ embeds into $\mathbb{R}^t$, as the real points of the variety determined by $A$, i.e., $y \mapsto (y(h_1),\dots,y(h_t))$. We however prefer the coordinate free view. It also applies to algebras that are not finitely generated.

 We have an obvious map $m : X \rightarrow Y$ defined by $m(x)(f) = \bar{f}(x)$. $m(x)$ is naturally identified with the equivalence class of $x$ with respect to $\sim$. Each $f \in A$ defines a function $\hat{f} : Y \rightarrow \mathbb{R}$ given by $\hat{f}(y) := y(f)$. This coincides with the usual interpretation of $f$ as a polynomial on the variety of $A$. The map $\hat{} : A \rightarrow \mathbb{R}^Y$ is an $\mathbb{R}$-algebra homomorphism. $Y$ is given the weakest topology such that the functions $\hat{f}$, $f \in A$, are continuous.   In case that $Y$ is embedded as a variety into $\mathbb{R}^t,$ this is just the topology inherited from the euclidean topology. $K_{Q,Y}$ denotes the set of all $y\in Y$ such that $\hat{g}(y)\ge 0$ for all $g \in Q$.

 We are interested in understanding when, for all $f \in A$,

\medskip

P1: \ $\bar{f} >0 \text{ on } K_{Q,X} \ \Rightarrow \ f\in Q$,

\medskip
\noindent
or

\medskip

P2: \ $\bar{f} \ge 0 \text{ on } K_{Q,X}\ \Rightarrow \ f+\epsilon \in Q \ \forall \text{ real } \epsilon >0.$

\medskip
\noindent
One can consider other conditions as well, for example:

\medskip
P0: $\bar{f} \ge 0$ on $K_{Q,X}$ $\Rightarrow$ $f\in Q$,

\medskip
P3: $\bar{f} \ge 0$ on $K_{Q,X}$ $\Rightarrow$ $\exists$ $h\in A$ such that $f+\epsilon h \in Q$ $\forall$ real $\epsilon >0$,

\medskip
P4: $\bar{f} \ge 0$ on $K_{Q,X}$ $\Rightarrow$ $f \in Q^{\vee\vee}$ (the closure of $Q$ in the finest locally convex topology on $A$ \cite{CMN}), and

\medskip
P5: $\bar{f} \ge 0$ on $K_{Q,X}$ $\Rightarrow$ $\hat{f} \ge 0$ on $K_{Q,Y}$.

\medskip
\noindent
Obviously P0 $\Rightarrow$ P1 $\Rightarrow$ P2 $\Rightarrow$ P3 $\Rightarrow$ P4 $\Rightarrow$ P5.

\medskip

We record an immediate consequence of Jacobi's result in \cite{J}.

\begin{thm}\label{basic} Suppose the quadratic module $Q$ is archimedean.
\begin{itemize}
\item[(1)] If $m(K_{Q,X}) = K_{Q,Y}$ then P1 holds.
\item[(2)] If $\overline{m(K_{Q,X})} = K_{Q,Y}$ then P2 holds.
\end{itemize}
\end{thm}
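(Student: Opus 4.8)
The plan is to derive both parts directly from Jacobi's representation theorem, which, for an archimedean quadratic module $Q$ of $A$, asserts that any $f\in A$ with $\hat f>0$ on $K_{Q,Y}$ already lies in $Q$. Thus the entire task is to transfer the hypothesized positivity of $\bar f$ on $K_{Q,X}$ into strict positivity of $\hat f$ on $K_{Q,Y}$ (in case (2), only after adding a positive constant). The one structural observation used throughout is that $m$ always carries $K_{Q,X}$ into $K_{Q,Y}$: for $x\in K_{Q,X}$ and $g\in Q$ one has $\hat g(m(x))=m(x)(g)=\bar g(x)\ge 0$, so $m(x)\in K_{Q,Y}$. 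Hence $m(K_{Q,X})\subseteq K_{Q,Y}$ unconditionally, and the hypotheses of (1) and (2) are precisely what is needed to control the reverse inclusion (respectively, its closure).

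For part (1), assume $m(K_{Q,X})=K_{Q,Y}$ and let $f\in A$ satisfy $\bar f>0$ on $K_{Q,X}$. Given any $y\in K_{Q,Y}$, write $y=m(x)$ with $x\in K_{Q,X}$; then $\hat f(y)=m(x)(f)=\bar f(x)>0$. So $\hat f>0$ on all of $K_{Q,Y}$, and Jacobi's theorem yields $f\in Q$. This is exactly P1.

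For part (2), assume $\overline{m(K_{Q,X})}=K_{Q,Y}$ and let $f\in A$ satisfy $\bar f\ge 0$ on $K_{Q,X}$. As above, $\hat f(m(x))=\bar f(x)\ge 0$ for every $x\in K_{Q,X}$, so $\hat f\ge 0$ on $m(K_{Q,X})$. Since $Y$ carries the weakest topology making every $\hat g$ continuous, $\hat f$ is continuous, so the closed set $\{\hat f\ge 0\}$ contains $\overline{m(K_{Q,X})}=K_{Q,Y}$. Therefore $\hat f\ge 0$ on $K_{Q,Y}$, and for each real $\epsilon>0$ we get $\widehat{f+\epsilon}=\hat f+\epsilon>0$ on $K_{Q,Y}$; applying Jacobi's theorem to $f+\epsilon$ gives $f+\epsilon\in Q$. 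This is P2.

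There is no real obstacle here: once Jacobi's theorem is invoked the argument is pure bookkeeping. The only point worth flagging is the gap between the strict inequality that Jacobi's theorem demands and the non-strict inequality that a closure/continuity argument produces — this is exactly why hypothesis (2), the natural one when $m(K_{Q,X})$ is not closed, delivers only the weaker conclusion P2 rather than P1. It is also worth noting that archimedeanness enters solely through Jacobi's theorem (it is what makes $K_{Q,Y}$ compact and forces the representation), and that no continuity or measurability of the elements of $A$ viewed as functions on $X$ is needed for this abstract statement.
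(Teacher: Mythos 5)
Your proof is correct and follows exactly the route the paper takes: both reduce the statement to Jacobi's representation theorem ($\hat f>0$ on $K_{Q,Y}\Rightarrow f\in Q$ for archimedean $Q$) and then transfer positivity from $K_{Q,X}$ to $K_{Q,Y}$ via $m$, using continuity of $\hat f$ and the closure hypothesis in part (2). The paper states this as ``immediate''; you have simply written out the bookkeeping it leaves implicit.
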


Here $\overline{m(K_{Q,X})}$ denotes the closure of $m(K_{Q,X})$ in $Y$. Note that $m(K_{Q,X}) \subseteq K_{Q,Y}$ and $K_{Q,Y}$ is closed in $Y$ so $\overline{m(K_{Q,X})} \subseteq K_{Q,Y}$.

\begin{proof} We know by Jacobi's result, see \cite[Theorem 4]{J} or \cite[Theorem 2.3]{M0}, that $\hat{f} > 0$ on $K_{Q,Y}$ $\Rightarrow$ $f \in Q$. See \cite[Theorem 5.4.4]{M} for a simple proof. The result stated is immediate from this.
\end{proof}

\begin{rem} \label{rk} \

(1) If $Q$ is archimedean then $K_{Q,Y}$ is compact; see \cite[Theorem 5]{J} or \cite[Theorem 5.7.2(1)]{M}.

(2) If $\overline{m(K_{Q,X})} = K_{Q,Y}$ then P5 holds. Conversely, if P5 holds and either $K_{Q,Y}$ is compact or $A$ is finitely generated, then $\overline{m(K_{Q,X})} = K_{Q,Y}$.

\begin{proof} The first assertion is clear. For the second assertion, suppose first that $K_{Q,Y}$ is compact and $y\in K_{Q,Y}$, $y \notin \overline{m(K_{Q,X})}$.  By Stone-Weierstrass there is some $f \in A$, $\hat{f}>0$ on $\overline{m(K_{Q,X})}$, $\hat{f}(y)<0$, which contradicts P5. Suppose next that $A$ is finitely generated, say by $h_1,\dots,h_t$. Then, for any $y\in Y$ and any neighborhood $U$ of $y$ in $Y$, the element $f = \sum_{j=1}^t (h_j - y(h_j))^2-\epsilon$ is negative at $y$ and positive outside $U$, for $\epsilon >0$ sufficiently close to zero.
\end{proof}

(3) Combining items (1) and (2) with the fact that P2 $\Rightarrow$ P5, we deduce that the converse of Theorem \ref{basic}(2) is true, i.e., if $Q$ is archimedean and P2 holds then $\overline{m(K_{Q,X})} = K_{Q,Y}$. On the other hand, the converse of Theorem \ref{basic}(1) is not true. See Example \ref{ex1}(4) below for a counterexample.

(4) According to  \cite[Propositions 1.3, 2.1]{CMN}, $Q$ is archimedean iff $1$ is an algebraic interior point of $Q$ iff $Q$ has an algebraic interior point, and $f$ is an algebraic interior point of $Q$ iff $f$ is an interior point of $Q$ in the finest locally convex topology on $A$.

(5) In \cite[Proposition 1]{LP} and \cite[Theorem 2.1]{P} the hypothesis that  $m(K_{Q,X}) = K_{Q,Y}$ or $\overline{m(K_{Q,X})} = K_{Q,Y}$ is replaced by the hypothesis that $X$ is a topological space, $A$ is an $\mathbb{R}$-algebra of borel measurable functions on $X$, and $Q$ satisfies the \it moment property \rm relative to $X$, i.e., for all linear mappings $L : A \rightarrow \mathbb{R}$ such that $L\ge 0$ on $Q$ there is a positive borel measure $\mu$ on $X$, supported by $K_{Q,X}$, such that $L(f) = \int f d \mu$ $\forall$ $f\in A$. There is also an additional assumption that $X$ is a subspace of $\mathbb{R}^n$ for some $n\ge 1$.

(6) We claim the hypothesis that $Q$ satisfies the moment property relative to $X$ implies that $m(K_{Q,X}) = K_{Q,Y}$ or, at least, that $\overline{m(K_{Q,X})} = K_{Q,Y}$. Suppose first that $A$ is countably generated as an $\mathbb{R}$-algebra and $Q$ satisfies the moment property relative to $X$. We claim this forces $m(K_{Q,X}) = K_{Q,Y}$.

\begin{proof} Suppose $y \in K_{Q,Y}$, $y \notin m(K_{Q,X})$. Define $L$ by $L(f) = \hat{f}(y)$. Then $L \ge 0$ on $Q$ so we have a positive borel measure $\mu$ on $X$ such that $\int f d\mu =L(f) = \hat{f}(y)$ $\forall$ $f \in A$.  Fix a countable set of $\mathbb{R}$-algebra generators $\{ h_i\}$ for $A$. Replacing $h_i$ by $h_i-\hat{h_i}(y)$, we may assume $\hat{h_i}(y)=0$. Then $\int h_i^2 d\mu = 0$, so $\{ x \in X \mid h_i(x)\ne 0\}$ has $\mu$ measure zero, for each $i$. Since $y \notin m(K_{Q,X})$, there is no $x \in X$ satisfying $h_i(x)=0$ for all $i$. Thus $X$ is a countable union of sets of measure zero, so it has measure zero. Since $\mu(X) = \int 1 d\mu = 1$, this is a contradiction.
\end{proof}

Now drop the assumption that $A$ is countably generated, assuming only that $K_{Q,Y}$ is compact and $Q$ satisfies the moment property relative to $X$. We claim this forces $\overline{m(K_{Q,X})} = K_{Q,Y}$.

\begin{proof} Suppose $y \in K_{Q,Y}$, $y \notin \overline{m(K_{Q,X})}$. Fix a positive borel measure $\mu$ on $X$ such that $\int f d\mu = \hat{f}(y)$ $\forall$ $f \in A$. Since $K_{Q,Y}$ is compact, by Stone-Weierstrass, there exists $f\in A$, $\hat{f}>0$ on $\overline{m(K_{Q,X})}$, $\hat{f}(y)<0$. Then $f>0$ on $K_{Q,X}$, so $\int f d\mu >0$, which contradicts $\int f d\mu = \hat{f}(y)<0$.
\end{proof}

(7) In summary then, the hypothesis in Theorem \ref{basic} is easily understood and, at the same time, it is weaker than the corresponding hypothesis in \cite[Proposition 1]{LP} and \cite[Theorem 2.1]{P}.
\end{rem}

We see that the interesting question here is to identify cases where $m(K_{Q,X})=K_{Q,Y}$ or $\overline{m(K_{Q,X})}=K_{Q,Y}$ holds. Then the usual Positivstellens\"atze can be applied, but the nonnegativity needs to be  required only on $K_{Q,X}$ instead of $K_{Q,Y}$.   

The following examples are similar to the ones given in \cite{LP}.

\begin{exm} \label{ex1} \

(1) Let $X = \mathbb{R}$, $A$ the $\mathbb{R}$-algebra generated by $x(t)=\cos t$ and $y(t)=\sin t$, $Q = \sum A^2$. Then $A = \frac{\mathbb{R}[x,y]}{(x^2+y^2-1)}$, $Y = \{ (x,y) \in \mathbb{R}^2 \mid x^2+y^2=1\}$, $K_{Q,X} = X$, $K_{Q,Y} = Y$, $m(t) = (\cos t, \sin t)$, and $m(K_{Q,X})=K_{Q,Y}$. Thus Theorem \ref{basic} applies. Any $f \in A$ strictly positive on $\mathbb{R}$ is a sum of squares. Actually, P0 holds. Any $f\in A$ with $f\geq 0$ on $\mathbb{R}$ is a sum of squares. This follows from the corresponding well-known results for polynomials on the circle.

(2) Let $X = \mathbb{R}$, $A$ the $\mathbb{R}$-algebra generated by $x(t)=e^t$ and $y(t)=e^{-t}$, $Q = \operatorname{QM}(2-x^2,2-y^2)$. Then $A = \frac{\mathbb{R}[x,y]}{(xy-1)}$, $Y = \{ (x,y) \in \mathbb{R}^2 \mid xy=1\}$, $K_{Q,X} = [-\ln \sqrt2,\ln \sqrt2]$, $K_{Q,Y} = \{ (x,y) \mid xy=1,\, x^2\le 2,\, y^2\le 2\}$, $m(t) = (e^t, e^{-t})$. In this example, $m(K_{Q,X}) =\overline{m(K_{Q,X})}$ is properly contained in $K_{Q,Y}$, so Theorem \ref{basic} does not apply. On the other hand, if we let $Q' = \operatorname{QM}(2-x^2,2-y^2,x+y)$ then $K_{Q',X} = K_{Q,X}$ and $m(K_{Q',X}) = K_{Q',Y}$. By Theorem \ref{basic}, $f\in A$, $f> 0$ on $[-\ln \sqrt2, \ln \sqrt2]$ $\Rightarrow$ $f\in Q'$.

(3) Let $X= \mathbb{R} \backslash \{ 0 \}$, $A=$ the $\mathbb{R}$-algebra generated by $x(t)= t$ and $y(t)= \frac{|t|}{t}$, $Q = \operatorname{QM}(1-x^2)$. Then $A = \frac{\mathbb{R}[x,y]}{(y^2-1)}$, $Y = \{ (x,y) \in \mathbb{R}^2 \mid y^2 =1\}$, $K_{X,Q} = \{ t\in \mathbb{R} \backslash \{ 0 \} \mid -1 \le t \le 1\}\}$ and $K_{Q,Y} = \{ (x,y) \in \mathbb{R}^2 \mid y^2=1, -1\le x \le 1\}$. $\overline{m(K_{Q,X})}$ is properly contained in $K_{Q,Y}$. On the other hand, if we let $Q' = \operatorname{QM}(1-x^2, xy\}$ then $K_{Q',X} = K_{Q,X}$ and $\overline{m(K_{Q',X})} = K_{Q',Y}$, so Theorem \ref{basic} applies to $Q'$, even though it does not apply to $Q$. If $f \in A$ is $\ge 0$ on $K_{Q,X}$, then $f+\epsilon$ is expressible in the form $f+\epsilon = s_0+s_1(1-x^2)+s_2xy= s_0 + s_1(1-t^2)+s_2\vert t\vert$ for some $s_0,s_1,s_2 \in \sum A^2$.

(4) Let $X = \mathbb{R}$, $A =$ the $\mathbb{R}$-algebra generated by $x(t) = \frac{1}{1+t^2}$, $Q =
\operatorname{QM}(x,1-x)$. Then $A=\mathbb{R}[x]$, $Y = \mathbb{R}$, $m(t) = \frac{1}{1+t^2}$, $K_{Q,X} = \mathbb{R}$, $m(K_{Q,X}) = (0,1]$, and $K_{Q,Y} = [0,1]$. Property P0 holds: Any $f\in A$ satisfying $f\ge 0$ on $\mathbb{R}$ belongs to $Q$. This follows from the corresponding result on nonnegative polynomials on $[0,1]$.
On the other hand, since $m(K_{Q,X})$ is properly contained in $K_{Q,Y}$, $Q$ does not satisfy the moment property relative to $X$, by Remark \ref{rk} (6).
\end{exm}

One can consider other Positivstellens\"atze as well. For example, one can consider when, for all $f \in A$,

\medskip

Q1: \ $\bar{f} >0 \text{ on } K_{Q,X} \ \Rightarrow$ $\exists$ $p \in \sum A^2$ and $q \in Q$ such that $pf=1+q$,

\medskip
\noindent
or

\medskip

Q2: \ $\bar{f} \ge 0 \text{ on } K_{Q,X}\ \Rightarrow$ $\exists$ $p \in \sum A^2$, $q \in Q$ and $k\ge 0$ such that $pf=f^{2k}+q$.

\begin{thm} \label{basic2} Suppose the algebra $A$ is finitely generated and the quadratic module $Q$ is finitely generated and closed under multiplication. Then \begin{itemize}
\item[(1)] Q1 holds iff $m(K_{Q,X}) = K_{Q,Y}$.
\item[(2)] Q2 holds iff $\overline{m(K_{Q,X})} = K_{Q,Y}$.
\end{itemize}
\end{thm}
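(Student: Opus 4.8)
The plan is to derive both parts from the Krivine--Stengle Positivstellensatz, in the same spirit in which Theorem~\ref{basic} derives P1 and P2 from Jacobi's theorem, with Remark~\ref{rk}(2) doing the translation between conditions on $m(K_{Q,X})$ and the property P5. First, fix algebra generators $h_1,\dots,h_t$ of $A$, so that $A\cong\mathbb{R}[X_1,\dots,X_t]/I$ and $y\mapsto(y(h_1),\dots,y(h_t))$ identifies $Y$ with the set of real zeros of $I$ in $\mathbb{R}^t$; under this identification each $\hat f$ is the restriction to $Y$ of a polynomial representing $f$. Since $Q$ is finitely generated as a quadratic module and closed under multiplication, $Q$ is the preordering of $A$ generated by finitely many $g_1,\dots,g_s$, and $K_{Q,Y}=\{y\in Y\mid\hat{g_i}(y)\ge0,\ i=1,\dots,s\}$. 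Lifting $g_1,\dots,g_s$ to $\mathbb{R}[X_1,\dots,X_t]$ and adjoining $\pm$ a finite generating set of $I$ (finite, as $\mathbb{R}[X_1,\dots,X_t]$ is Noetherian), the classical Positivstellensatz \cite[page 25]{M}, applied to this data and reduced modulo $I$, gives for every $f\in A$: $\hat f>0$ on $K_{Q,Y}$ iff $pf=1+q$ for some $p\in\sum A^2$ and $q\in Q$; and $\hat f\ge0$ on $K_{Q,Y}$ iff $pf=f^{2k}+q$ for some $p\in\sum A^2$, $q\in Q$ and $k\ge0$. The ``$\Leftarrow$'' directions here are immediate, by evaluating the identities at points of $K_{Q,Y}$ and using that $\hat p$, $\hat q$ and the $\hat{g_i}$ are nonnegative there.

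For part~(2): by the second equivalence, ``Q2 holds'' is, after substituting the Positivstellensatz description of its conclusion, literally the statement P5. Since $A$ is finitely generated, Remark~\ref{rk}(2) gives ``P5 holds $\iff\overline{m(K_{Q,X})}=K_{Q,Y}$'', and (2) follows.

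For part~(1): since $\bar f(x)=\hat f(m(x))$ for all $x\in X$, the hypothesis ``$\bar f>0$ on $K_{Q,X}$'' is the same as ``$\hat f>0$ on $m(K_{Q,X})$''. So if $m(K_{Q,X})=K_{Q,Y}$, then ``$\bar f>0$ on $K_{Q,X}$'' forces ``$\hat f>0$ on $K_{Q,Y}$'', whence the first equivalence yields $p\in\sum A^2$ and $q\in Q$ with $pf=1+q$; thus Q1 holds. Conversely, suppose Q1 holds and fix $y_0\in K_{Q,Y}$; I want $y_0\in m(K_{Q,X})$ (the reverse inclusion $m(K_{Q,X})\subseteq K_{Q,Y}$ always holds, so this gives (1)). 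Put $f:=\sum_{j=1}^t(h_j-y_0(h_j))^2\in A$; then $\hat f(y)=\sum_{j=1}^t(y(h_j)-y_0(h_j))^2$ is nonnegative on $Y$ and vanishes only at $y_0$, since an $\mathbb{R}$-algebra homomorphism is determined by its values on the $h_j$. If $y_0\notin m(K_{Q,X})$, then $\hat f>0$ on $m(K_{Q,X})$, that is, $\bar f>0$ on $K_{Q,X}$, so Q1 provides $p\in\sum A^2$ and $q\in Q$ with $pf=1+q$; evaluating at $y_0$ gives $0=\hat p(y_0)\hat f(y_0)=1+\hat q(y_0)$, so $\hat q(y_0)=-1<0$, contradicting $y_0\in K_{Q,Y}$. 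Hence $y_0\in m(K_{Q,X})$.

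The routine ingredients are the two ``$\Leftarrow$'' implications and the separating-function construction, which are modelled on the proofs of Theorem~\ref{basic} and Remark~\ref{rk}. The step I expect to require the most care is the transfer of the Positivstellensatz from the polynomial ring to $A$, which need not itself be a polynomial algebra: one must check that finite generation of $A$ over $\mathbb{R}$ lets the real points of the variety $Y$ replace the real spectrum in the classical statement, that reduction modulo $I$ carries the classical preordering onto the given finitely generated $Q$ (so the remainder $q$ lies in $Q$ itself, not merely in some larger preordering), and that the multiplier $p$ can be taken in $\sum A^2$ rather than only in $Q$. It is precisely these points where the hypotheses that $A$ is finitely generated, that $Q$ is finitely generated, and that $Q$ is closed under multiplication are used.
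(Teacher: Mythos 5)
This is correct and essentially the paper's proof: the ($\Leftarrow$) directions come from the Krivine--Stengle Positivstellensatz transferred to the finitely generated quotient $A$ (which the paper likewise cites as immediate, leaving the same bookkeeping---including the precise form of the multiplier $p$---implicit), and the ($\Rightarrow$) directions use the separating elements $\sum_j(h_j-y(h_j))^2$, resp.\ $\sum_j(h_j-y(h_j))^2-\epsilon$, evaluated at the offending point $y$, exactly as in the paper. Your routing of part (2) through P5 and Remark \ref{rk}(2) is only a repackaging, since the proof of that remark in the finitely generated case is precisely the $\epsilon$-perturbed separating-function argument the paper writes out directly.
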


\begin{proof} ($\Leftarrow$): This is an immediate consequence of the Krivine-Stengle Positivstellensatz, see \cite[page 25]{M}. ($\Rightarrow$): Fix generators $h_1,\dots,h_t$ for $A$. If $y\in K_{Q,Y} \backslash m(K_{Q,X})$ then $f := \sum_{i=1}^t (h_i-y(h_i))^2$ is positive on $K_{Q,X}$ and zero at $y$, which contradicts Q1. If $y \in K_{Q,Y} \backslash \overline{m(K_{Q,X})}$ then, for $\epsilon >0$ sufficiently small, $f:= \sum_{i=1}^t (h_i-y(h_i))^2-\epsilon$ is positive on $K_{Q,X}$ and negative at $y$, which contradicts Q2.
\end{proof}

Theorem \ref{basic2} confirms once again the importance of understanding when the conditions $m(K_{Q,X}) = K_{Q,Y}$ and $\overline{m(K_{Q,X})} = K_{Q,Y}$ hold. According to \cite[Corollary 7.4.2]{M} the conclusion of Theorem \ref{basic2} continues to hold if the requirement that $Q$ is closed under multiplication is replaced by the requirement that $\dim(Y)\le 1$.

In Section 2 we consider when it is possible to enlarge the finitely generated quadratic module $Q$ of $A$ to a finitely generated quadratic module $Q'$ of $A$ which satisfies $K_{Q',X} = K_{Q,X}$, and $\overline{m(K_{Q',X})} = K_{Q',Y}$; see Corollary \ref{dim1} and Example \ref{dimge2}.
In Section 3, we start with an algebra $B$ of functions on $X$ and a quadratic module $Q$ of $B$ for which we already know that $m(X)=K_{Q,Y_B}$ or $\overline{m(X)}=K_{Q,Y_B}$ holds, and we consider ways in which $B$ and $Q$ can be extended to an algebra $A$ of functions on $X$ and a quadratic module $Q'$ of $A$ retaining these same properties; see Propositions \ref{inj}, \ref{alinj}, \ref{comp}, \ref{a} and \ref{b}. The results in Section 3, although not directly comparable to the results in \cite[Section 3]{LP}, are related to and were motivated by these results.
In Section 4 we give examples of the various sorts of extension described in Section 3.


\section{enlarging the quadratic module}

Suppose the $\mathbb{R}$-algebra $A\subseteq \mathbb{R}^X$ is finitely generated, say $A=\mathbb{R}[h_1,\dots,h_t]$, and the quadratic module $Q$ of $A$ is also finitely generated. We know $\overline{m(K_{Q,X})} \subseteq K_{Q,Y}$. As we have seen, special interest is attached to the case where $\overline{m(K_{Q,X})}= K_{Q,Y}$. Thus it is natural to wonder when there exists a finitely generated quadratic module $Q'$ of $A$ with $Q' \supseteq Q$, $K_{Q',X} = K_{Q,X}$, and $\overline{m(K_{Q',X})} = K_{Q',Y}$.

Note: If we drop the requirement that $Q'$ be finitely generated, then the existence of $Q'$ is more-or-less trivial. 
One can just adjoin to $Q$, for each point $y \in K_{Q,Y} \backslash \,\overline{m(K_{Q,X})}$, an element of the form $\sum_{j=1}^t (h_j-y(h_j))^2- \epsilon$, $\epsilon >0$ which is positive on $\overline{m(K_{Q,X})}$ and negative at $y$.

$Y$ is identified with the algebraic subset $\{ (y(h_1),\dots,y(h_t)) \mid y\in Y\}$ of $\mathbb{R}^t$. $K_{Q,Y} \subseteq Y$ is a basic closed semialgebraic set, i.e. defined by finitely many simultaneous polynomial inequalities. For simplicity, we restrict to the case where the set $\overline{m(K_{Q,X})}$ is semialgebraic too:

\begin{exm} Suppose $X$ is a semialgebraic subset of $\mathbb{R}^n$, for some $n\ge 1$, and $A$ is an algebra of semialgebraic functions on $X$.  The map $m: X \rightarrow Y$ is given by $\underline{x} \mapsto (h_1(\underline{x}),\dots, h_t(\underline{x}))$, which is a semialgebraic map, so $m(K_{Q,X})$ and $\overline{m(K_{Q,X})}$ are semialgebraic sets in $K_{Q,Y}$.
\end{exm}

Clearly, the quadratic module $Q'$ that we are looking for will exist iff the closed semialgebraic set $\overline{m(K_{Q,X})}$ is basic closed. Let $S := \overline{m(K_{Q,X})}$. According to Br\"ocker's Criterion, , e.g., see \cite[Theorem 7.1.1]{M-1}, $S$ is basic iff, for each real prime $\frak{p}$ of $A$, the associated constructible set $\widetilde{S}_{\frak{p}}$ in the real spectrum of the field of fractions of the domain $A/\frak{p}$ is basic. To check if $\widetilde{S}_{\frak{p}}$ is basic, one can use the so-called Fan Criterion; e.g., see \cite[Proposition 3]{Bro}, although this criterion is not always easy  to apply.

If $\dim(S) \le 1$ then $S$ is always basic; see \cite{Bro} or \cite[Theorem 7.5.4]{M-1}.

\begin{crl}\label{dim1}In the setup as above, if $\dim(m(K_{Q,X})) \le 1$ then there is a finitely generated quadratic module $Q'$ of $A$ with $Q' \supseteq Q$, $K_{Q',X} = K_{Q,X}$, and $\overline{m(K_{Q',X})} = K_{Q',Y}$.
\end{crl}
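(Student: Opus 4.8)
The plan is to verify the hypotheses of Br\"ocker's Criterion for the closed semialgebraic set $S := \overline{m(K_{Q,X})}$, using the dimension bound to invoke the low-dimensional case. First I would observe that $\dim(S) = \dim(m(K_{Q,X}))$: since $m(K_{Q,X})$ is semialgebraic and dense in $S$, the two sets have the same dimension (taking closures does not change dimension for semialgebraic sets). So the hypothesis $\dim(m(K_{Q,X})) \le 1$ gives $\dim(S)\le 1$.

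Next I would apply the fact, quoted just before the corollary (see \cite{Bro} or \cite[Theorem 7.5.4]{M-1}), that every closed semialgebraic set of dimension at most $1$ is basic closed, i.e., can be written as $\{ \underline{z} \in Y \mid p_1(\underline{z}) \ge 0, \dots, p_r(\underline{z}) \ge 0\}$ for finitely many polynomials $p_1,\dots,p_r$ (restricted to $Y$, or equivalently elements $g_1',\dots,g_r'$ of $A$ via the identification of $Y$ with the real points of the variety of $A$ and $h_i \mapsto y(h_i)$). Thus $S = K_{Q'',Y}$ where $Q'' := \operatorname{QM}(g_1',\dots,g_r')$ is finitely generated.

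Then I would set $Q' := Q + Q'' = \operatorname{QM}(g_1,\dots,g_s,g_1',\dots,g_r')$, where $g_1,\dots,g_s$ are the given generators of $Q$; this is finitely generated and contains $Q$. It remains to check the two required identities. For $K_{Q',X} = K_{Q,X}$: the inclusion $\subseteq$ is clear since $Q \subseteq Q'$. For $\supseteq$, I need each $\bar{g_i'}$ to be nonnegative on $K_{Q,X}$; this holds because $\widehat{g_i'} \ge 0$ on $S = \overline{m(K_{Q,X})} \supseteq m(K_{Q,X})$ by construction, and $\bar{g_i'}(x) = \widehat{g_i'}(m(x))$ for $x \in X$, so $g_i' \ge 0$ on $K_{Q,X}$, and then $\bar{g}\ge 0$ on $K_{Q,X}$ for every $g\in Q'$, giving $K_{Q,X}\subseteq K_{Q',X}$. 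For $\overline{m(K_{Q',X})} = K_{Q',Y}$: since $K_{Q',X} = K_{Q,X}$, the left side is $S$. For the right side, $K_{Q',Y} = K_{Q,Y} \cap K_{Q'',Y} = K_{Q,Y} \cap S$, and since $S = \overline{m(K_{Q,X})} \subseteq K_{Q,Y}$ (as noted after Theorem \ref{basic}, $K_{Q,Y}$ is closed and contains $m(K_{Q,X})$), we get $K_{Q',Y} = S$. Hence both sides equal $S$ and the corollary follows.

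The only real content is the dimension-$\le 1$ basicness result, which is cited and may be assumed; everything else is bookkeeping around the identifications $\bar{f}(x) = \widehat{f}(m(x))$ and the semialgebraicity of $m(K_{Q,X})$ established in the preceding Example. The one point that warrants a moment's care is ensuring the polynomials produced by Br\"ocker's result can genuinely be taken as elements of $A$ (rather than ambient polynomials in $\mathbb{R}^t$ that only accidentally agree with elements of $A$ on $Y$) — but this is automatic, since $Y$ is identified with the real points of the variety whose coordinate ring is $A$, so restriction of polynomials in $\mathbb{R}[Z_1,\dots,Z_t]$ to $Y$ is exactly the surjection onto $A$ sending $Z_i \mapsto h_i$, and the sign conditions only involve the restricted functions.
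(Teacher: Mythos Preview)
Your argument is correct and follows exactly the approach the paper intends: the paper observes that the desired $Q'$ exists iff $S=\overline{m(K_{Q,X})}$ is basic closed, and then invokes the cited result that closed semialgebraic sets of dimension $\le 1$ are always basic. You have simply spelled out the ``clearly'' in that equivalence---constructing $Q'=Q+Q''$ from a basic description of $S$ and verifying $K_{Q',X}=K_{Q,X}$ and $K_{Q',Y}=S$---which the paper leaves to the reader.
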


Note: One can reduce always to the case where $Y$ is the Zariski closure of $m(K_{Q,X})$. This just involves replacing $A$ by the $\mathbb{R}$-algebra $A/I$ where $I$ is the ideal of elements of $A$ vanishing on $K_{Q,X}$.

Already in the 2-dimensional case there are examples where no such $Q'$ (as in Corollary \ref{dim1}) exists:

\begin{exm}\label{dimge2} Let $X= \mathbb{R}^2$, $A = \mathbb{R}[u,v]$, where $u(x,y) = x$, $v(x,y) = |x|-|y|$, and let $Q$ be the quadratic module of $A$ generated by $1-u^2$ and $1-v^2$. $Y$ is identified with $\mathbb{R}^2$. $m : X \rightarrow Y$ is given by $(x,y) \mapsto (x,|x|-|y|)$. $K_{Q,Y}$ is $[-1,1]\times [-1,1]$. $m(K_{Q,X}) = \overline{m(K_{Q,X})} = \{ (u,v) \mid -1\le u\le 1, -1 \le v \le |u|\}$. This latter set is not basic, so there is no finitely generated $Q'$ such that  $Q' \supseteq Q$, $K_{Q',X} = K_{Q,X}$, and $\overline{m(K_{Q',X})} = K_{Q',Y}$. One shows that the set $S = m(K_{Q,X})$ is not basic by showing that $\widetilde{S}_{\frak{p}}$ is not basic,  where $\frak{p} = \{ 0 \}$. Since $A = \mathbb{R}[u,v]$ and $\frak{p} = \{ 0 \}$, the field of fractions of the domain $A/\frak{p}$ is the rational function field $\mathbb{R}(u,v)$. The Fan Criterion is violated. The orderings of the power series field $\mathbb{R}((u,v))$ compatible with the discrete valuation ring $\mathbb{R}[[u,v]]_{(u-v)}$ form a 4-element fan whose intersection with $\widetilde{S}_{\frak{p}}$ has 3 elements.
\end{exm}

\section{Enlarging the algebra}

In this section we start with some algebra $B$ of functions on $X$, and some quadratic module $Q$ of $B$, for which we already know that $m(X)=K_{Q,Y_B}$ or $\overline{m(X)}=K_{Q,Y_B}$ holds. We then want to enlarge the algebra and retain these equalities.

So let $B$ be an $\mathbb{R}$-subalgebra of $\mathbb{R}^X$, where $X$ is any set. Let $Q\subseteq B$ be a quadratic module with $K_{Q,X}=X.$ For any extension $B\subseteq A\subseteq \mathbb{R}^X$ of algebras we have a canonical continuous map $p\colon Y_A\rightarrow Y_B$, and if $Q'\subseteq A$ is a quadratic module with $Q'\cap B\supseteq Q$, then $p\colon K_{Q',Y_A}\rightarrow K_{Q,Y_B}$. If also $K_{Q',X}=X$ holds we have the following commutative diagram: $$\xymatrix{X \ar@{->}^{m\quad}[r] \ar@{->}_{m}[dr] &  \ar@{->}^{p}[d]   K_{Q',Y_A}  \\ & K_{Q,Y_B}  } $$

\begin{prop} \label{inj} Assume $m(X)=K_{Q,Y_B}$ and $A=B[f]$ for some $f\in A$. In each of the following cases the mapping $p\colon K_{Q',Y_A}\rightarrow K_{Q,Y_B}$ is injective, $K_{Q',X}=X$ holds,  and we thus have $m(X)=K_{Q',Y_A}$: \begin{itemize}
\item[(1)] $f=\root r\of g$, $r$ odd, $g\in B$, $Q'$ the quadratic module generated by $Q$ in $A$.
\item[(2)] $f=\root s\of g$, $s$ even, $g\in B$, $g\geq 0$ on $X$, and $Q'$ the quadratic module generated by $Q$ and $f$ in $A$.
\item[(3)] $f=\frac1g, $  $g\in B$, $g\neq 0$ on $X$, $Q'$ the quadratic module generated by $Q$ in $A$.
\item[(4)] $f= g\cdot \chi_{\{q\geq 0\}}+ h\cdot \chi_{\{q<0\}}$ for some $g,h,q\in B$, such that $q(x)=0$ implies $g(x)=h(x)$ for $x\in X$, and $Q'$ the quadratic module generated by $Q$ and $-q(f-g)^2, q(f-h)^2$ in $A$.\end{itemize}
\end{prop}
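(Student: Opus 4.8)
The plan is to handle all four cases by the same two-step strategy: first verify $K_{Q',X}=X$, which reduces to checking that the extra generators of $Q'$ are nonnegative as functions on $X$; second, show $p\colon K_{Q',Y_A}\to K_{Q,Y_B}$ is injective, which (combined with the already-known surjectivity of $m\colon X\to K_{Q,Y_B}$ through the commutative diagram, and the inclusion $m(X)\subseteq K_{Q',Y_A}$) forces $m(X)=K_{Q',Y_A}$. Indeed, if $p$ is injective and $p\circ m=m$ is onto $K_{Q,Y_B}$, then $m\colon X\to K_{Q',Y_A}$ is onto: given $y'\in K_{Q',Y_A}$, pick $x\in X$ with $m(x)=p(y')$ in $Y_B$; then $p(m(x))=p(y')$, and $m(x),y'\in K_{Q',Y_A}$, so injectivity of $p$ on $K_{Q',Y_A}$ gives $m(x)=y'$.

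For the first step, note that since $A=B[f]$, a point $y'\in Y_A$ is determined by its restriction $y=p(y')\in Y_B$ together with the value $y'(f)\in\mathbb{R}$, subject to whatever algebraic relation $f$ satisfies over $B$. In case (1), $f^r=g$, so $y'(f)^r=y(g)$; since $r$ is odd, $y'(f)$ is uniquely determined by $y(g)$, hence by $y$ — this already gives injectivity of $p$ on all of $Y_A$, and there are no extra generators, so $K_{Q',X}=K_{Q,X}=X$. In case (3), $fg=1$, so $y'(f)=1/y(g)$ is determined by $y$ (and $y(g)\neq 0$ is automatic since $y(g)y'(f)=1$); again no extra generators. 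Case (2) is the one requiring the extra generator $f$: from $f^s=g$ with $s$ even we get $y'(f)^s=y(g)$, which determines $y'(f)$ only up to sign; the constraint $\hat f(y')=y'(f)\ge 0$ from $y'\in K_{Q',Y_A}$ pins down the sign, giving injectivity; and $f=\sqrt[s]{g}\ge 0$ on $X$ because $g\ge 0$ there, so $K_{Q',X}=X$.

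Case (4) is the main obstacle and deserves the most care. Here $f$ satisfies the relations $q\cdot(f-g)\cdot(f-h)=0$ in $A$ (since on $\{q\ge0\}$ we have $f=g$ and on $\{q<0\}$ we have $f=h$, and where $q=0$ we have $g=h$ so $(f-g)(f-h)=0$ anyway) — one should first check this identity holds in $A$ as a subalgebra of $\mathbb{R}^X$, using the hypothesis that $q(x)=0\Rightarrow g(x)=h(x)$. Then for $y'\in K_{Q',Y_A}$ with $y=p(y')$: applying $y'$ to the relation gives $y(q)\cdot(y'(f)-y(g))\cdot(y'(f)-y(h))=0$. If $y(q)\neq 0$ then $y'(f)\in\{y(g),y(h)\}$, and the generators $-q(f-g)^2$ and $q(f-h)^2$ of $Q'$ force $\widehat{-q(f-g)^2}(y')\ge0$ and $\widehat{q(f-h)^2}(y')\ge0$, i.e. $-y(q)y'(f-g)^2\ge0$ and $y(q)y'(f-h)^2\ge0$: when $y(q)>0$ the first gives $y'(f)=y(g)$, when $y(q)<0$ the second gives $y'(f)=y(h)$, so $y'(f)$ is determined by $y$. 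If $y(q)=0$, one needs that $y(g)=y(h)$ so that the relation still pins $y'(f)$ down; this follows because $g-h$ vanishes on the set $\{q=0\}\supseteq\{x: q(x)=0\}$, hence $g-h$ lies in the (real) radical of the ideal generated by $q$... more precisely, $q$ divides some power of $(g-h)$, or at least $(g-h)^2\le Nq^2$-type reasoning, giving $y(g-h)=0$ whenever $y(q)=0$; I would argue this via $y(q)=0\Rightarrow y((g-h)^2)\le$ (something vanishing), exploiting that $(g-h)^2\chi_{\{q=0\}}=0$ and hence $(g-h)^2 = (g-h)^2\chi_{\{q\neq0\}}$, which one can bound. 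Finally $K_{Q',X}=X$ because $-q(f-g)^2$ and $q(f-h)^2$ are both $\ge0$ at every $x\in X$: at $x$ with $q(x)\ge0$ we have $f(x)=g(x)$ so $-q(x)(f-g)^2(x)=0\ge0$ and $q(x)(f-h)^2(x)\ge0$; at $x$ with $q(x)<0$ we have $f(x)=h(x)$ so $q(x)(f-h)^2(x)=0\ge0$ and $-q(x)(f-g)^2(x)=-q(x)(g-h)^2(x)\ge0$. This completes all four cases; the delicate point throughout is matching the sign conditions coming from membership in $K_{Q',Y_A}$ against the branch ambiguity in the value $y'(f)$, and in case (4) additionally handling the degenerate locus $y(q)=0$.
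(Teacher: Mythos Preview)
Your overall strategy and your treatment of cases (1)--(3) match the paper's proof. The gap is in case (4), and it is twofold.

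First, the identity you should use is $(f-g)(f-h)=0$, not $q\cdot(f-g)(f-h)=0$. In fact your own parenthetical justification already proves the stronger identity: on $\{q\ge 0\}$ one has $f=g$, and on $\{q<0\}$ one has $f=h$, so one of the two factors vanishes at every point of $X$, regardless of the regularity assumption. With only the weaker identity $q\cdot(f-g)(f-h)=0$, applying $y'$ when $y(q)=0$ yields the triviality $0=0$ and gives no information about $y'(f)$ at all; so even if you knew $y(g)=y(h)$, your stated relation would not ``pin $y'(f)$ down''. Using $(f-g)(f-h)=0$ instead gives $y'(f)\in\{y(g),y(h)\}$ in every case, and then $y(g)=y(h)$ does finish the job.

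Second, your proposed route to $y(g)=y(h)$ via real-radical or bounding arguments such as $(g-h)^2\le Nq^2$ does not work in this generality: $B$ is an arbitrary algebra of real-valued functions, there is no Nullstellensatz available, and such bounds simply fail (e.g.\ $q=t^2$, $g-h=2t$ on $X=[-1,1]$). The correct argument is the one piece of the hypothesis you have not yet used: $m(X)=K_{Q,Y_B}$. Since $p(y')\in K_{Q,Y_B}=m(X)$, there is $x\in X$ with $p(y')=m(x)$, i.e.\ $y(b)=b(x)$ for all $b\in B$. In particular $y(q)=q(x)$, so $y(q)=0$ forces $q(x)=0$, whence $g(x)=h(x)$ by the regularity assumption, i.e.\ $y(g)=y(h)$. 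This is exactly how the paper handles the degenerate locus.
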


In (4), $\chi_S$ denotes the characteristic function of a set $S\subseteq X$. For the necessity of the assumptions in (4) see Example \ref{counter3} in the next section. Note that functions constructed from polynomials by applying (1),(2) and (3) finitely many times are also considered  in Section 3 of \cite{LP}.

\begin{proof}[Proof of Proposition \ref{inj}] First note that $K_{Q',X}=X$ in all of the cases. Then note that injectivity of $p$ implies $m(X)=K_{Q',Y_A}$. This can be seen from the above commutative diagram.

So let $y\in K_{Q',Y_A}$. In case (1) we find $y(f)^r=y(g)$, and thus $y(f)=y(g)^{\frac1r}$. So $y$ is already uniquely determined by its values on $B$, i.e. by $p(y)$. In case (2) we also have $y(f)^s=y(g)$, and since $y(f)\geq 0$, $y(f)$ is again uniquely determined by the values of $y$ on $B$. In case (3) we find $y(f)y(g)=1$. So $y(g)\neq 0$ and again $y$ is uniquely determined by $p(y)$.

In case (4) we get $y(f)= y(g)$ or $y(f)=y(h)$ from the identity $(f-g)(f-h)=0$ that holds in $A$. If $y(q)>0$ then $y(f)=y(g)$ since $-q(f-g)^2\in Q'$. If $y(q)<0$ then $y(f)=y(h)$ since $q(f-h)^2 \in Q'$. If $y(q)=0$ then $y(g)=y(h)=y(f)$, since $m(X)=K_{Q,Y_B}$ and the regularity assumption on $g,h,q$. All in all, $y$ is uniquely determined by $p(y)$ in all cases.
\end{proof}

Note that if $B$ consists of  continuous functions on $X$, then $A$ in the above Proposition can also only contain continuous functions.
If we want to adjoin something non-continuous, for example a characteristic function, we can not expect to get $m(X)=K_{Q',Y_A}$ for the bigger algebra:

\begin{exm}\label{char} Let $X=[-1,1]$, $B=\mathbb{R}[t]$ and $Q$ the quadratic module generated by $1-t^2$. We have $m(X)=K_{Q,Y_B}$. Let $f=\chi_{[0,1]}$ and $A=B[f]$. Consider the element $y\in Y_A$ defined by $y(a)= \lim\limits_{t\rightarrow 0^-} a(t)$. If $Q'$ is any quadratic module with $K_{Q',X}=X$, then $y\in K_{Q',Y_A}$. But one checks that $y$ does not belong to $m(X)$.
\end{exm}

However, if we only want to get $\overline{m(X)}=K_{Q',Y_A}$, we can go beyond the setup of continuous functions:

\begin{prop} \label{alinj} Assume $\overline{m(X)}=K_{Q,Y_B}$ and $A=B[f]$ for some $f\in A$. In each of the following cases we  also have $K_{Q',X}=X$ and  $\overline{m(X)}=K_{Q',Y_A}$: \begin{itemize}
\item[(1)] $(2)$ $(3)$ as in Proposition \ref{inj}.
\item[(4)] $f= g\cdot \chi_{\{q\geq 0\}}+ h\cdot \chi_{\{q<0\}}$ for some $g,h,q\in B$, such that $\hat{q}(y)=0$ implies $\hat{g}(y)=\hat{h}(y)$ for $y\in K_{Q,Y_B}$, and $Q'$ the quadratic module generated by $Q$ and $-q(f-g)^2, q(f-h)^2$ in $A$.
\item[(5)] $f=\chi_{\{q\geq 0\}}$ for some $q\in B$ fulfilling the following regularity condition: $ \{ y\in K_{Q,Y_B}\mid  \hat{q}(y)=0 \} \subseteq \overline{ \{ y\in K_{Q,Y_B} \mid \hat{q} (y)>0 \} } \cap   \overline{ \{ y\in K_{Q,Y_B} \mid \hat{q} (y)<0 \} },$ and $Q'$ the quadratic module generated by $Q$ and  $qf,q(f-1)$ in $A$.
\end{itemize}
\end{prop}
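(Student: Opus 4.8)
The plan is to establish, in each case, first that $K_{Q',X}=X$ and then the two inclusions comprising $\overline{m(X)}=K_{Q',Y_A}$. The equality $K_{Q',X}=X$ is a routine sign check in every case: in (1)--(3) it is exactly as in the proof of Proposition \ref{inj}; in (4), on $\{q\ge 0\}$ one has $f=g$, so $-q(f-g)^2=0$ and $q(f-h)^2\ge 0$ there, while on $\{q<0\}$ one has $f=h$, so $q(f-h)^2=0$ and $-q(f-g)^2\ge 0$ there; in (5), on $\{q\ge 0\}$ one has $f=1$, so $qf=q\ge 0$ and $q(f-1)=0$ there, while on $\{q<0\}$ one has $f=0$, so $qf=0$ and $q(f-1)=-q\ge 0$ there. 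None of these checks uses the regularity hypotheses. Granting $K_{Q',X}=X$, the inclusion $\overline{m(X)}\subseteq K_{Q',Y_A}$ is then formal: for $x\in X$ and $g'\in Q'$ we have $\widehat{g'}(m(x))=g'(x)\ge 0$, so $m(X)\subseteq K_{Q',Y_A}$, and $K_{Q',Y_A}$ is closed in $Y_A$. Everything else is devoted to the reverse inclusion $K_{Q',Y_A}\subseteq\overline{m(X)}$.

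For that, fix $y\in K_{Q',Y_A}$. Since $Q\subseteq Q'\cap B$ we have $p(y)\in K_{Q,Y_B}=\overline{m(X)}$, so there is a net $(x_\alpha)$ in $X$ with $m(x_\alpha)\to p(y)$ in $Y_B$, i.e.\ $b(x_\alpha)\to y(b)$ for all $b\in B$. Because $A=B[f]$, every $\widehat{a}$ ($a\in A$) is a polynomial in $\widehat{f}$ with coefficients among the $\widehat{b}$, so the topology of $Y_A$ coincides with the initial topology for $\{\widehat{b}\mid b\in B\}\cup\{\widehat{f}\}$; consequently $m(x_\alpha)\to y$ in $Y_A$ as soon as $f(x_\alpha)\to y(f)$, and then $y\in\overline{m(X)}$, as wanted. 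In cases (1)--(3) let $\phi$ be, respectively, $t\mapsto\sqrt[r]{t}$ on $\mathbb{R}$, $t\mapsto\sqrt[s]{t}$ on $[0,\infty)$, and $t\mapsto 1/t$ on $\mathbb{R}\setminus\{0\}$. The defining relation for $f$ gives $f=\phi\circ g$ on $X$, while in $A$ it gives $y(f)^r=y(g)$, resp.\ $y(f)^s=y(g)$ with $y(f)\ge 0$ (since $f\in Q'$), resp.\ $y(f)y(g)=1$; in each case this forces $y(f)=\phi(y(g))$ and $\phi$ is continuous at $y(g)$ (in (3), $y(g)=1/y(f)\ne 0$). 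Since $g(x_\alpha)=\widehat{g}(m(x_\alpha))\to y(g)$, continuity of $\phi$ gives $f(x_\alpha)=\phi(g(x_\alpha))\to\phi(y(g))=y(f)$ (in (3) pass to the cofinal subnet on which $g(x_\alpha)\ne 0$).

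In case (4) the identity $(f-g)(f-h)=0$ in $A$ yields $y(f)\in\{y(g),y(h)\}$. If $y(q)>0$ then $-q(f-g)^2\in Q'$ forces $y(f)=y(g)$, and since $q(x_\alpha)\to y(q)>0$ we have $f(x_\alpha)=g(x_\alpha)$ eventually, hence $f(x_\alpha)\to y(g)=y(f)$; the subcase $y(q)<0$ is symmetric via $q(f-h)^2\in Q'$. If $y(q)=0$ then $\widehat{q}(p(y))=0$, so the regularity hypothesis gives $y(g)=\widehat{g}(p(y))=\widehat{h}(p(y))=y(h)$, hence $y(f)=y(g)=y(h)$; and since $f(x_\alpha)$ is, for each $\alpha$, one of $g(x_\alpha)$, $h(x_\alpha)$, both of which converge to $y(g)=y(h)$, again $f(x_\alpha)\to y(f)$.

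Case (5) is the main obstacle. From $f^2=f$ in $A$ we get $y(f)\in\{0,1\}$, and $qf\in Q'$, $q(f-1)\in Q'$ give $y(q)\ge 0$ when $y(f)=1$ and $y(q)\le 0$ when $y(f)=0$. Suppose $y(f)=1$. The point is that the net $(x_\alpha)$ can be taken inside $\{q>0\}$: since $m(X)$ is dense in $K_{Q,Y_B}$ and $\{\widehat{q}>0\}$ is open in $Y_B$, the set $m(\{q>0\})=m(X)\cap\{\widehat{q}>0\}$ is dense in $K_{Q,Y_B}\cap\{\widehat{q}>0\}$, so
\[
\{\,z\in K_{Q,Y_B}\mid\widehat{q}(z)>0\,\}\ \subseteq\ \overline{m(\{q>0\})}
\]
and hence, taking closures, $\overline{\{z\in K_{Q,Y_B}\mid\widehat{q}(z)>0\}}\subseteq\overline{m(\{q>0\})}$. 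Now $p(y)$ lies in this last closure --- directly if $y(q)>0$, and by the regularity hypothesis if $y(q)=0$ --- so we may indeed choose $(x_\alpha)$ in $\{q>0\}$; then $f(x_\alpha)=\chi_{\{q\ge 0\}}(x_\alpha)=1=y(f)$ for every $\alpha$, and $m(x_\alpha)\to y$ in $Y_A$ as before. The subcase $y(f)=0$ is symmetric, using $\{\widehat{q}<0\}$ and the second half of the regularity condition. The delicate steps are precisely these appeals to the regularity hypotheses in (4) and (5), combined with the density observation that lets us slide the approximating net into $\{q>0\}$ (resp.\ $\{q<0\}$); the rest is bookkeeping.
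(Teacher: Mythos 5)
Your proof is correct and follows essentially the same route as the paper's: both reduce the problem to approximating $p(y)$ in $K_{Q,Y_B}$ while controlling $\hat f$, using the same algebraic identities ($f^r=g$, $(f-g)(f-h)=0$, $f^2=f$, etc.) to pin down $y(f)$, and invoking the regularity hypotheses exactly in the cases $y(q)=0$. Your net formulation, together with the observation that the $Y_A$-topology is the initial topology for $\{\hat b\}_{b\in B}\cup\{\hat f\}$ and the density of $m(X)\cap\{\hat q>0\}$ in $K_{Q,Y_B}\cap\{\hat q>0\}$, is just a repackaging of the paper's argument with basic open neighborhoods $V$ and the auxiliary open sets $W$.
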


\begin{proof}
$K_{Q',X}=X$ is clear in all of the cases. Now let $y\in K_{Q',Y_A}$ and let $V$ be an open  neighborhood of $y$ in $K_{Q',Y_A}$. We have to show that $V$ contains some element from $m(X)$. By definition of the topology we can assume that $V$ is of the following form: $$V= \hat{f}^{-1}(U)\cap \bigcap_{i=1}^r \hat{b}_i^{-1}(U_i)\cap K_{Q',Y_A},$$ for some $b_i\in B$ and $U,U_i$ open subsets of $\mathbb{R}$. In case (1)  we consider the following open subset of $K_{Q,Y_B}:$ $$W:=\bigcap_{i=1}^r \hat{b}_i^{-1}(U_i)\cap \hat{g}^{-1}(U^r)\cap K_{Q,Y_B}.$$ $W$ is nonempty since $p(y)\in W$, and thus contains some point $m(x)$. But then also $m(x)\in V\subseteq K_{Q',Y_A}$, as one easily checks.  In case (2) we can assume that $U$ is closed under taking absolute values, using $y(f)\geq 0$. Then in the definition of $W$ we replace $\hat{g}^{-1}(U^r)$ by $\hat{g}^{-1}(U^s\cup -U^s)$ and repeat the above argument. In case (3) we can assume $0\notin U$, since $y(f)y(g)=1$. Then repeat the argument with $\hat{g}^{-1}(\frac{1}{U})$ in the definition of $W$. In case (4) first assume $y(q)>0$. Then $y(f)=y(g)$, and we consider $$W= \hat{q}^{-1}((0,\infty))\cap \hat{g}^{-1}(U)\cap\bigcap_i \hat{b}_i^{-1}(U_i)\cap K_{Q,Y_B}.$$ Is is nonempty and open, so contains some $m(x)$. From $q(x)>0$ we see $f(x)=g(x)\in U$, and thus $m(x)\in V$. The case $y(q)<0$ is similar. In case $y(q)=0$ use $$W=\hat{g}^{-1}(U)\cap\hat{h}^{-1}(U)\cap \bigcap_i \hat{b}_i^{-1}(U_i)\cap K_{Q,y_B}$$ and the assumption that $y(q)=0$ implies $y(g)=y(h)=y(f)$. In case (5) assume $y(q)>0$ and use $$W=\hat{q}^{-1}((0,\infty))\cap \bigcap_{i} \hat{b}_i^{-1}(U_i)\cap K_{Q,Y_B}.$$ For $m(x)\in W$ we find $f(x)=1$, and also $y(f)=1$ since $q(f-1)\in Q'$ and $f^2-f=0$. So $m(x)\in V$. $y(q)<0$ is similar. If now finally $y(q)=0$ and $y(f)=1$ say, then $$W=\hat{q}^{-1}(0,\infty))\cap \bigcap \hat{b}_i^{-1}(U_i)\cap K_{Q,Y_B}$$ is nonempty, using the regularity assumption on $\hat{q}$. Any point $m(x)\in W$ then also lies in $V$. The case $y(f)=0$ is similar.
\end{proof}

For the necessity of the regularity assumptions in (5), see Examples \ref{counter2} and \ref{counter4} below.
The following easy observation is also interesting:

\begin{lemma}\label{cont}
Let $X$ be a compact topological space and assume $A\subseteq\mathbb{R}^X$ contains only functions continuous on $X$. Then $\overline{m(X)}=m(X)$ in $Y_A$.
\end{lemma}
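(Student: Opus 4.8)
The plan is to show that the map $m : X \rightarrow Y_A$ is a continuous map from a compact space, hence has compact image, and that a compact subset of the Hausdorff space $Y_A$ is closed, which forces $\overline{m(X)} = m(X)$. So the argument breaks into three routine checks: continuity of $m$, compactness of $m(X)$, and the Hausdorff property of $Y_A$.

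First I would verify that $m$ is continuous. Recall $Y_A$ carries the weakest topology making all the evaluation functions $\hat{a} : Y_A \rightarrow \mathbb{R}$, $a \in A$, continuous, so a basic open set is a finite intersection $\bigcap_i \hat{a}_i^{-1}(U_i)$. Now $\hat{a}\circ m : X \rightarrow \mathbb{R}$ sends $x \mapsto m(x)(a) = \bar a(x) = a(x)$, which is exactly the function $a$ itself; by hypothesis every $a \in A$ is continuous on $X$, so $\hat a \circ m$ is continuous for every $a$, and therefore $m$ is continuous. Consequently $m(X)$ is compact, being the continuous image of the compact space $X$.

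Next I would check that $Y_A$ is Hausdorff. If $y_1 \neq y_2$ in $Y_A$, then there is some $a \in A$ with $y_1(a) \neq y_2(a)$, i.e. $\hat a(y_1) \neq \hat a(y_2)$; choosing disjoint open intervals $U_1 \ni \hat a(y_1)$, $U_2 \ni \hat a(y_2)$ in $\mathbb{R}$, the sets $\hat a^{-1}(U_1)$ and $\hat a^{-1}(U_2)$ separate $y_1$ and $y_2$. Hence $Y_A$ is Hausdorff, so its compact subset $m(X)$ is closed, giving $\overline{m(X)} = m(X)$.

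I do not anticipate a genuine obstacle here; the only point that deserves a word of care is that the elements of $A$ are \emph{assumed} continuous on $X$ (so that $\hat a\circ m = a$ is continuous), and that the topology on $Y_A$ is precisely the initial topology for the family $\{\hat a\}_{a\in A}$, which is what delivers both continuity of $m$ and the Hausdorff separation. Everything else is the standard fact that a continuous image of a compact space inside a Hausdorff space is closed.
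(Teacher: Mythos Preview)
Your proof is correct and follows exactly the same route as the paper: continuity of $m$ (from the initial topology on $Y_A$ and continuity of the elements of $A$), compactness of $X$, and the Hausdorff property of $Y_A$ combine to make $m(X)$ closed. The paper's version is simply terser, phrasing the conclusion as ``$m$ is a closed mapping'' rather than ``$m(X)$ is compact, hence closed in the Hausdorff space $Y_A$,'' but the content is identical.
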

\begin{proof}
The mapping $m\colon X\rightarrow Y_A$ is continous. Since $X$ is compact and $Y_A$ is hausdorff, $m$ is a closed mapping. So $m(X)$ is closed in $Y_A$.
\end{proof}

 In some special cases we can relax the regularity condition from Proposition \ref{alinj} (5) to a condition involving the set $X$ only. This weaker condition then turns out to be necessary, however (see Example \ref{counter} in the next section).

\begin{prop} \label{comp} Assume $X$ is a compact topological space, $B$ contains only continuous functions on $X$ and  $\overline{m(X)}=K_{Q,Y_B}$. Suppose $q\in B$ fulfills the following regularity condition: $$\{x\in X\mid q(x)=0\}\subseteq\overline{\{x\in X\mid q(x)<0\}}.$$ Then for $f=\chi_{\{ q\geq 0 \}}$, $A=B[f]$, and $Q'$ the quadratic module generated by $Q$ and $qf,q(f-1)$ in $A$ we have $K_{Q',X}=X$ and $\overline{m(X)}=K_{Q',Y_A}.$
\end{prop}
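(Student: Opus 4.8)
The plan is to derive the conclusion of Proposition~\ref{comp} from Proposition~\ref{alinj}~(5) by checking that its topological regularity condition on $\hat q$ (inside $K_{Q,Y_B}$) follows from the weaker hypothesis on $q$ inside $X$ given here, using compactness of $X$, continuity of the elements of $B$, and the identity $\overline{m(X)}=K_{Q,Y_B}$. Since $A=B[f]$ and $Q'$ are exactly as in part~(5), and $K_{Q',X}=X$ is immediate (the generators $qf$ and $q(f-1)$ vanish identically on $X$ because $\bar f=\chi_{\{q\ge 0\}}$, so $\overline{qf}=\overline{q(f-1)}=0$ pointwise on $X$, and $K_{Q,X}=X$ was assumed), it suffices to verify the inclusion
$$\{y\in K_{Q,Y_B}\mid \hat q(y)=0\}\ \subseteq\ \overline{\{y\in K_{Q,Y_B}\mid \hat q(y)>0\}}\ \cap\ \overline{\{y\in K_{Q,Y_B}\mid \hat q(y)<0\}}.$$

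First I would handle the easier of the two closures. For any $y\in K_{Q,Y_B}$ with $\hat q(y)=0$: since $\overline{m(X)}=K_{Q,Y_B}$, every neighborhood of $y$ meets $m(X)$; and for $x\in X$ either $q(x)\ge 0$ or $q(x)<0$. If $q(x)\ge 0$ then $f(x)=1$, and one uses that $q(x)$ can be made small; but more directly, observe that the set $\{y\in K_{Q,Y_B}\mid \hat q(y)\ge 0\}$ is closed and contains $m(\{x\in X\mid q(x)\ge 0\})$, while $\{y\mid \hat q(y)\le 0\}$ is closed and contains $m(\{x\mid q(x)\le 0\})$; since $\overline{m(X)}=K_{Q,Y_B}$ and $X=\{q\ge 0\}\cup\{q<0\}$, one gets $K_{Q,Y_B}=\overline{m(\{q\ge 0\})}\cup\overline{m(\{q<0\})}$. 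The point $y$ with $\hat q(y)=0$ therefore lies in $\overline{m(\{q<0\})}\subseteq\overline{\{\hat q\le 0\}\cap K_{Q,Y_B}}$; I then need to upgrade $\le$ to $<$, i.e. show $y\in\overline{\{\hat q<0\}\cap K_{Q,Y_B}}$. This is where the hypothesis $\{x\in X\mid q(x)=0\}\subseteq\overline{\{x\in X\mid q(x)<0\}}$ enters: every $x$ with $q(x)=0$ is a limit of points $x'$ with $q(x')<0$, and continuity of the elements of $B$ makes $m$ continuous, so $m(\{q=0\})\subseteq\overline{m(\{q<0\})}$, hence $\overline{m(\{q\le 0\})}=\overline{m(\{q<0\})}$; combining with the previous displayed decomposition gives $y\in\overline{m(\{q<0\})}\subseteq\overline{\{\hat q<0\}\cap K_{Q,Y_B}}$.

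For the other closure, $y\in\overline{\{\hat q>0\}\cap K_{Q,Y_B}}$, I would argue that since $K_{Q,Y_B}=\overline{m(\{q\ge 0\})}$ might fail, I instead use compactness directly: suppose not, then there is an open neighborhood $V$ of $y$ in $K_{Q,Y_B}$ with $\hat q\le 0$ on $V$; shrinking, take $V=\bigcap_i\hat b_i^{-1}(U_i)\cap K_{Q,Y_B}$. Because $\overline{m(X)}=K_{Q,Y_B}$, there are $x_n\in X$ with $m(x_n)\to y$, so eventually $m(x_n)\in V$, forcing $q(x_n)=\hat q(m(x_n))\le 0$, and then $f(x_n)=\chi_{\{q\ge 0\}}(x_n)$ is $0$ unless $q(x_n)=0$. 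By the hypothesis $\{q=0\}\subseteq\overline{\{q<0\}}$ and continuity of the $b_i$, I can perturb each such $x_n$ to an $x_n'\in\{q<0\}$ with $m(x_n')$ still in $V$, so in fact $V$ meets $m(\{q<0\})$. This shows $\overline{\{\hat q<0\}\cap K_{Q,Y_B}}$ contains a neighborhood basis argument that... at this point I realize the intended route is cleaner: the hypothesis $\{q=0\}\subseteq\overline{\{q<0\}}$ says $\{q\ge 0\}$ has the same closure as $\{q>0\}\cup\overline{\{q<0\}}$, equivalently $\overline{\{q\ge 0\}}\cap\{q=0\}\subseteq\overline{\{q<0\}}$, so each point of $\{q=0\}$ is a limit both of points where $q<0$ and (trivially, being itself a limit of $\{q\ge 0\}$-points with $q>0$ possibly, or handled separately) — the symmetric half $y\in\overline{\{\hat q>0\}}$ is then obtained from $\overline{m(X)}=K_{Q,Y_B}$ together with the observation that $\{\hat q\le 0\}\cap K_{Q,Y_B}\subseteq\overline{m(\{q<0\})}$ has empty interior relative to the locus where one could separate $y$ off.

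The main obstacle I anticipate is precisely the ``upgrade from $\le$ to $>$'' step for the first closure as well as establishing membership in $\overline{\{\hat q>0\}\cap K_{Q,Y_B}}$ — i.e. ruling out that $y$ is an isolated zero of $\hat q$ from the $\{\hat q>0\}$ side. The hypothesis only controls $\{q=0\}$ from the $\{q<0\}$ side in $X$; to get the $\{\hat q>0\}$ side on $Y_B$ I expect to need that $y\in\overline{m(X)}$ and that $X=\{q<0\}\sqcup\{q=0\}\sqcup\{q>0\}$ forces, via compactness of $X$ and the closedness of $m$ (Lemma~\ref{cont}), the decomposition $K_{Q,Y_B}=m(X)$ with $\{\hat q=0\}\cap K_{Q,Y_B}=m(\{q=0\})\subseteq\overline{m(\{q<0\})}$ and, separately, $\{\hat q=0\}\cap K_{Q,Y_B}\subseteq\overline{\{\hat q\ge 0\}\cap K_{Q,Y_B}}=\overline{m(\{q\ge 0\})}$; the latter closure equals $\overline{\{\hat q>0\}\cap K_{Q,Y_B}}$ provided $m(\{q=0\})\subseteq\overline{m(\{q>0\})}$, which need \emph{not} follow from the stated hypothesis — so the genuinely delicate point is whether Proposition~\ref{comp} as stated is using $\overline{m(X)}=m(X)$ (Lemma~\ref{cont}) to make $\{\hat q=0\}$ accessible from the $\{\hat q>0\}$ side automatically because it is already accessible from $\{\hat q\ge 0\}$ and the set $\{\hat q=0\}$ is nowhere dense in $\overline{\{\hat q\ge0\}}$. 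I would resolve this by a careful separation argument using finitely many $\hat b_i$ and compactness, reducing everything to the one-variable behaviour of $\hat q$ near $y$; once the displayed two-sided-accessibility inclusion is in hand, Proposition~\ref{alinj}~(5) applies verbatim and finishes the proof.
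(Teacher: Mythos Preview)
Your strategy of reducing to Proposition~\ref{alinj}(5) cannot succeed, and the gap you yourself flag near the end is fatal rather than merely delicate. The two-sided accessibility condition
\[
\{\hat q=0\}\cap K_{Q,Y_B}\ \subseteq\ \overline{\{\hat q>0\}\cap K_{Q,Y_B}}\ \cap\ \overline{\{\hat q<0\}\cap K_{Q,Y_B}}
\]
does \emph{not} follow from the one-sided hypothesis of Proposition~\ref{comp}, even with $X$ compact and $B$ consisting of continuous functions. A concrete counterexample: take $X=[-1,0]$, $B=\mathbb{R}[t]$, $Q=\operatorname{QM}(-t,1+t)$, and $q=t$. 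Then $\{q=0\}=\{0\}\subseteq\overline{\{q<0\}}=[-1,0]$, so the hypothesis of Proposition~\ref{comp} is satisfied; yet $\{q>0\}\cap X=\emptyset$, hence $\{\hat q>0\}\cap K_{Q,Y_B}=\emptyset$, and the point $y=0$ is not in its closure. So the hypothesis of Proposition~\ref{alinj}(5) fails here, while one checks directly that the conclusion of Proposition~\ref{comp} nonetheless holds. Thus the reduction route is simply wrong.

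The paper instead argues directly in $Y_A$. By Lemma~\ref{cont} and the assumption $\overline{m(X)}=K_{Q,Y_B}$ one has $m(X)=K_{Q,Y_B}$, so every $y\in K_{Q',Y_A}$ satisfies $p(y)=m(x)$ for some $x\in X$. If $y(q)\neq 0$, the constraints $qf,\,q(f-1)\in Q'$ and $f^2=f$ force $y(f)=f(x)$, hence $y=m(x)$. If $y(q)=0$, then $q(x)=0$ and $f(x)=1$; in the subcase $y(f)=1$ one again has $y=m(x)$ \emph{exactly}, with no approximation needed from the $\{\hat q>0\}$ side. Only the subcase $y(q)=0$, $y(f)=0$ requires work: for a basic neighborhood $V=\hat f^{-1}(U)\cap\bigcap_i\hat b_i^{-1}(U_i)\cap K_{Q',Y_A}$ of $y$, the one-sided regularity on $X$ produces $z\in X$ near $x$ with $q(z)<0$ and $b_i(z)\in U_i$ (continuity of the $b_i$), whence $f(z)=0$ and $m(z)\in V$. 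The key insight you are missing is that the case $y(f)=1$ is disposed of by an exact hit, so accessibility from $\{\hat q>0\}$ is never invoked.

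(A minor side remark: $qf$ and $q(f-1)$ do not vanish identically on $X$; they are merely nonnegative there, which is what gives $K_{Q',X}=X$.)
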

\begin{proof}
$K_{Q',X}=X$ is clear. Again let $y\in K_{Q',Y_A}$. In case that $y(q)\neq 0$ a similar argument as above even shows that $y\in m(X)$, using Lemma \ref{cont}. In case $y(q)=0$ choose $x\in X$ with $p(y)=m(x)$ in $Y_B$. If $y(f)=1$ then $y=m(x)$. In case $y(f)=0$ take any open set $$V=\hat{f}^{-1}(U)\cap\bigcap_i\hat{b}_i^{-1}(U_i)\cap K_{Q',Y_A}$$ containing $y$.
Using the regularity assumption on $q$ take $z\in X$ with $q(z)<0$ and $b_i(z)\in U_i$ for all $i$. Then $f(z)=0$ and thus $m(z)\in V$.
\end{proof}

The above results now permit the following method. Start with a basic closed set $X=\{ x\in\mathbb{R}^n\mid g_1(x)\geq 0,\ldots,g_s(x)\geq 0\}$ for some $g_1,\ldots,g_s\in\mathbb{R}[x_1,\ldots,x_n]$. Let $A_0$ be the algebra of polynomial functions on $X$, and $Q_0$ the quadratic module generated by the functions $g_1,\ldots,g_s$ in $A_0$. Then clearly $m(X)=K_{Q_0,Y_{A_0}}$ holds. Now apply Propositions \ref{inj}, \ref{alinj} and \ref{comp} inductively, to produce a chain $$(A_0,Q_0)\subseteq \cdots \subseteq (A_k,Q_k)=(A,Q)$$ of algebras and quadratic modules, such that $m(X)=K_{Q_i,Y_{A_i}}$ or  $\overline{m(X)}=K_{Q_i,Y_{A_i}}$ holds in each step. Note that for checking the regularity assumptions from Proposition \ref{alinj} (4) and (5), it might be necessary to compute $K_{Q_i,Y_{A_i}}$ at several steps in the chain, whereas all other  conditions  can be checked on $X$ only.  In the resulting algebra $(A,Q)$ we can then for example apply Theorem \ref{basic}.  We  only have to take care about  $Q$ being archimedean:

\begin{prop}\label{a} Let $B$ be an algebra and $Q\subseteq B$ an archimedean quadratic module. Assume $A=B[f]$ for some $f\in A$ and $Q'\subseteq A$ a quadratic module with $Q\subseteq Q'\cap B$. In each of the following cases also  $Q'$ is archimedean in $A$:
\begin{itemize}
\item $f$ is integral over $B$
\item  $N-f^2\in Q'$ for some $N \in \mathbb{N}$
\end{itemize}
\end{prop}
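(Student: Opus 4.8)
The plan is to recall the standard criterion for archimedeanness: a quadratic module $Q'$ of $A$ is archimedean iff for every element in a set of algebra generators of $A$ there is an integer $N$ with $N \pm (\text{generator}) \in Q'$ (equivalently, $N - (\text{generator})^2 \in Q'$ suffices, since $N - f^2 \in Q'$ together with $(N+1)^2 - (f-1)^2 \cdot 4 \in \ldots$ — more simply, once $N - f^2 \in Q'$ one gets $2N - 2f = (N - f^2) + (N - 1) + (f-1)^2 \cdot \tfrac{1}{?}$; the clean statement is that $Q'$ is archimedean iff $Q' \cap B$ is archimedean in $B$ and $N - f^2 \in Q'$ for some $N$, because $A = B[f]$ is generated over $\mathbb{R}$ by generators of $B$ together with $f$). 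So it suffices to produce, for each case, an integer $N$ with $N - f^2 \in Q'$; the hypothesis $Q \subseteq Q' \cap B$ then transports the boundedness of all elements of $B$ (and in particular of any fixed generating set of $B$) into $Q'$, and the element $f$ is handled separately by the bound just produced. I would first state this reduction as a short lemma-within-the-proof, citing Remark \ref{rk}(4) or the archimedean characterization used for Theorem \ref{basic}.

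For the second bulleted case there is nothing to do: $N - f^2 \in Q'$ is exactly the hypothesis, so combined with the reduction above we are done immediately. The substance is the first case, $f$ integral over $B$. Here I would write a monic integral equation $f^n = b_{n-1}f^{n-1} + \cdots + b_1 f + b_0$ with $b_i \in B$. Since $Q \subseteq Q' \cap B$ is archimedean in $B$, each $b_i$ is bounded: there is $M \in \mathbb{N}$ with $M \pm b_i \in Q \subseteq Q'$ for all $i$, and also $M - 1 \in Q'$, etc. The goal is to deduce a bound $N - f^2 \in Q'$ purely formally from the integral equation and these bounds on the coefficients. This is a classical fact (an integral element over an archimedean-bounded base is itself bounded); the cleanest route is the one in Marshall's book: if every $b_i$ satisfies $\pm b_i \le M$ in the order induced by $Q'$ and $f$ is a root of a monic polynomial with these coefficients, then $|f| \le 1 + M$ follows from the usual Cauchy root bound argument carried out inside the quadratic module — one shows $(1+M)^2 - f^2 \in Q'$ by manipulating the integral equation, multiplying through by suitable squares, and using closure of $Q'$ under addition and multiplication by squares. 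I would cite \cite[Section 5.2]{M} (the treatment of archimedean quadratic modules and integral extensions) rather than reproduce the Cauchy-bound bookkeeping.

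The main obstacle is precisely this last manipulation: turning "$f$ is a root of a monic polynomial whose coefficients are bounded in the $Q'$-order" into an explicit certificate $N - f^2 \in Q'$ using only the quadratic-module axioms ($Q'+Q'\subseteq Q'$, $s^2 Q' \subseteq Q'$, $1 \in Q'$) and the bounds inherited from $Q$. The potential subtlety is that a quadratic module is not closed under multiplication in general, so one cannot freely multiply two bounds together; one must be careful to only ever multiply elements of $Q'$ by squares. The standard workaround — which I would follow — is to first establish $N - f^{2} \in Q'$ for the possibly large exponent coming from the degree $n$ of the integral equation, or to note that the set of $a \in A$ with $(\text{some }N) - a^2 \in Q'$ is closed under sums and products (this is the "ring of bounded elements", shown in \cite[Section 5.2]{M} to be a subring), so once $f$ is bounded every element of $A = B[f]$ is bounded, giving archimedeanness directly. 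I would phrase the first case via this bounded-elements subring: $B$ lies in it because $Q$ is archimedean, $f$ lies in it by the integral equation (citing the book's lemma that elements integral over the bounded subring are bounded), hence $A$ lies in it, hence $Q'$ is archimedean. The second case is the trivial observation that $f$ is in the bounded subring by hypothesis, so again $A$ is, so $Q'$ is archimedean.
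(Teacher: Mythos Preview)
Your proposal is correct and, once the initial exploratory remarks are stripped away, takes essentially the same route as the paper: introduce the ring $H_{Q'}$ of $Q'$-bounded elements (the set of $a\in A$ with $\ell\pm a\in Q'$ for some $\ell$), cite \cite[Proposition 5.2.3]{M} for the fact that it is a subring, observe $B\subseteq H_{Q'}$ from the archimedean hypothesis on $Q$, and then place $f$ in $H_{Q'}$ either by integral closure of $H_{Q'}$ (first case) or directly from $N-f^2\in Q'$ (second case), whence $A=B[f]\subseteq H_{Q'}$. The paper's proof is exactly this, with the integral-closure step referenced to \cite[Proposition 6.3.1]{B}; your citation of \cite[Section 5.2]{M} for the same fact is equally adequate.
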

\begin{proof} Denote by $H_{Q'}$ the set of all elements $a$ of $A$ such that $\ell \pm a \in Q'$ for some positive integer $\ell$. One knows that $H_{Q'}$ is an $\mathbb{R}$-algebra (\cite[Proposition 5.2.3]{M}) and $B\subseteq H_Q'$, since $Q'\cap B$ is archimedean in $B$. One also knows that $H_{Q'}$ is integrally closed in $A$\footnote{$H_{Q'}$ is even semi-integrally closed in $A$, i.e., if $a\in A$ and $\exists$ $n\ge 1$ and $a_i \in H_{Q'}$, $i=0,\dots, 2n-1$ such that $-a^{2n}+\sum_{i=0}^{2n-1} a_ix^i \in Q'$, then $x\in H_{Q'}$.  This can be deduced, for example, from \cite[Proposition 6.3.1]{B}.}.  So if $f$ is integral over $B$, $f$ is contained in $H_{Q'}$. If $N-f^2\in Q'$, then also $f\in H_{Q'}$. In both cases $A=H_{Q'}$, which means that $Q'$ is archimedean in $A$.
\end{proof}	

So in our inductive construction of an algebra $(A,Q)$, assume that $X\subseteq\mathbb{R}^n$ is compact. Then $N-\sum x_i^2\geq 0$ on $X$ for some big enough $N$. If we include such $N-\sum_{i=1}^n x_i^2$ to $Q_0$, then $Q_0$ is archimedean in $A_0$ and still $K_{Q_0,X}=X$ holds.  In any step where we adjoin to $A_i$ an element $f$ of the form (1), (2), (4) or (5), $f$ is integral over $A_i$. Indeed we have $f^r\in A_i, f^s\in A_i, (f-g)(f-h)=0$ and $f^2-f=0$ in the respective cases. So $Q_{i+1}$ remains archimedean, if $Q_i$ was.

 In case (3) where we adjoin some $f=\frac1g$ with $g\in A_i$ this is not necessarily true. We have to include some $N- f^2$ to $Q_{i+1}$. Note however that this might not be possible without destroying $K_{Q_{i+1},X}=X$. In Example \ref{char} look at $g:=(1-\chi_{[0,1]})t^2+\chi_{[0,1]}$. The function $g$ is strictly positive everywhere on $X$, but takes values arbitrary close to zero. So $f=\frac1g$ is not bounded on $X$.  If $g$ is continuous however, this problem does not occur.

Finally note that we  can of course start the construction process with any algebra $(A_0,Q_0)$ that fulfills $m(X)=K_{Q_0,Y_{A_0}}$. Taking the algebra of polynomial functions on a basic closed semialgebraic set $X$ is just the most convenient starting point.

We finish this section with a converse to the above extension theorems.

\begin{prop}\label{b} Let $B\subseteq A$ be an extension of $\mathbb{R}$-algebras, let $Q'\subseteq A$ be an archimedean quadratic module and set $Q:= Q'\cap B$. Then $p\colon K_{Q',Y_A}\rightarrow K_{Q,Y_B}$ is onto. Thus if $B\subseteq A\subseteq\mathbb{R}^X$, then $m(X)=K_{Q',Y_A}$ implies $m(X)=K_{Q,Y_B}$, and $\overline{m(X)}= K_{Q',Y_A}$ implies $\overline{m(X)}=K_{Q,Y_B}.$

\end{prop}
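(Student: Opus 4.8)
The plan is to show that the restriction map $p\colon K_{Q',Y_A}\to K_{Q,Y_B}$, $y\mapsto y|_B$, is surjective, and then read off the two corollaries about $X$ from the commutative diagram introduced at the start of the section. So let $z\in K_{Q,Y_B}$ be given; I want to extend $z$ to a homomorphism $y\colon A\to\mathbb{R}$ which is nonnegative on $Q'$. The natural device is a Zorn's lemma / Gelfand–Naimark type argument: since $Q'$ is archimedean, the set of linear functionals $L\colon A\to\mathbb{R}$ with $L(1)=1$ and $L\ge 0$ on $Q'$ is a nonempty convex set which is compact in the weak-$*$ topology (archimedeanity of $Q'$ gives, for each $a\in A$, an integer $\ell$ with $\ell\pm a\in Q'$, hence $|L(a)|\le\ell$, so the functionals live in a product of compact intervals). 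Among such $L$ that additionally restrict to the evaluation $z$ on $B$ — this subset is nonempty because $z$ itself, viewed via $B\hookrightarrow A$, extends by Hahn–Banach-type / cone-separation arguments to a functional nonnegative on $Q'$ (here one uses $Q'\cap B=Q$ together with archimedeanity so that no element of $B$ that is nonnegative under $z$ causes an obstruction) — I pick an extreme point $y$. The standard fact (as in \cite[Chapter 5]{M}) is that an extreme point of the state space of an archimedean quadratic module is multiplicative, i.e. an $\mathbb{R}$-algebra homomorphism; hence $y\in Y_A$, $y\ge 0$ on $Q'$ so $y\in K_{Q',Y_A}$, and $y|_B=z$, i.e. $p(y)=z$.

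More precisely, for the extension step I would argue directly: define on the subspace $B+\mathbb{R}f_0$ (or, cleanly, argue with the quotient cone) that $z$ does not take a negative value on $Q'\cap B = Q$, which is given, and then invoke the general extension principle for states on archimedean ordered vector spaces — every state on an order-unit subspace extends to a state on the whole space (this is essentially the order-unit Hahn–Banach theorem, and $1$ is an order unit for $A$ with respect to $Q'$ precisely because $Q'$ is archimedean, by Remark \ref{rk}(4)). Thus the fibre $p^{-1}(z)$ in the state space is a nonempty weak-$*$-compact convex set; by Krein–Milman it has an extreme point, which is automatically an extreme point of the full state space (a standard face argument), hence multiplicative. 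This gives surjectivity of $p$.

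For the consequences about $X$: we are given $B\subseteq A\subseteq\mathbb{R}^X$, and we always have the maps $m\colon X\to Y_A$ and $m\colon X\to Y_B$ with $p\circ m=m$. If $m(X)=K_{Q',Y_A}$, then applying $p$ and using surjectivity of $p\colon K_{Q',Y_A}\to K_{Q,Y_B}$ gives $m(X)=p(m(X))=p(K_{Q',Y_A})=K_{Q,Y_B}$. If instead $\overline{m(X)}=K_{Q',Y_A}$, then since $p$ is continuous and surjective on these sets, $p(\overline{m(X)})\subseteq\overline{p(m(X))}=\overline{m(X)}$ (inside $Y_B$), while $p(\overline{m(X)})=p(K_{Q',Y_A})=K_{Q,Y_B}$; combined with the always-valid inclusion $\overline{m(X)}\subseteq K_{Q,Y_B}$ this yields $\overline{m(X)}=K_{Q,Y_B}$. (One should note $K_{Q,Y_B}$ is closed, indeed compact, by Remark \ref{rk}(1) since $Q$ is archimedean, being the intersection of the archimedean $Q'$ with $B$.)

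I expect the only real subtlety to be the extension-plus-extremality step, i.e. verifying carefully that a state on $B$ extends to a state on $A$ for the cone $Q'$ and that one can choose the extension to be an algebra homomorphism; both are "known facts" about archimedean quadratic modules (the multiplicativity of extreme states is exactly the engine behind Jacobi's representation theorem cited in the proof of Theorem \ref{basic}), so the write-up can cite \cite[Theorem 5.4.4]{M} or \cite[Theorem 2.3]{M0} rather than reproving them. Everything else is a diagram chase with closures, which is routine.
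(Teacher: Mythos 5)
Your proof is correct in substance, but it takes a genuinely different route from the paper's. The paper argues by contradiction and uses Jacobi's theorem purely as a black box: since $Q'$ is archimedean, $K_{Q',Y_A}$ is compact, so $p(K_{Q',Y_A})$ is closed in the compact set $K_{Q,Y_B}$; if some $y\in K_{Q,Y_B}$ were missed, Stone--Weierstrass would produce $b\in B$ with $\hat{b}(y)<0$ and $\hat{b}>0$ on $p(K_{Q',Y_A})$, whence $b>0$ on $K_{Q',Y_A}$, so $b\in Q'$ by Jacobi, so $b\in Q'\cap B=Q$, contradicting $\hat{b}(y)<0$. You instead construct a preimage directly: extend the character $z$ to a $Q'$-positive state of $A$ via the Riesz/order-unit extension theorem (archimedeanity of $Q'$ gives cofinality of $B$, and $Q'\cap B=Q$ gives positivity of $z$ on the trace of the cone), then take an extreme point of the fibre and invoke multiplicativity of extreme states. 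This works, and is arguably more constructive, but two steps deserve tightening. First, the "standard face argument" rests on $z$ being an extreme state of $(B,Q)$, which holds because $z$ is multiplicative (a short Cauchy--Schwarz argument using $(b-z(b))^2\in\sum B^2$); you should say this. Second, and more importantly, the fact that extreme states of an archimedean quadratic module are $\mathbb{R}$-algebra homomorphisms lying in $K_{Q',Y_A}$ is \emph{not} what \cite[Theorem 5.4.4]{M} or \cite[Theorem 2.3]{M0} state -- those are the positivity certificates themselves. The extreme-state fact is true (e.g.\ via GNS plus the spectral theorem, or by combining Jacobi with a Riesz-representation argument to realize every state as a measure on $K_{Q',Y_A}$), but it needs its own proof or a precise citation; as written, that is the one load-bearing step you have only gestured at. The trade-off: the paper's proof is shorter and needs only the already-cited form of Jacobi's theorem plus Stone--Weierstrass; yours avoids the separation argument but imports heavier convexity machinery.
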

\begin{proof}
$Q'$ is archimedean in $A$, and thus $Q$ is archimedean in $B$. So both $K_{Q',Y_A}$ and $K_{Q,Y_B}$ are compact. So $p(K_{Q',Y_A})$ is compact and therefore closed in $K_{Q,Y_B}$. Now assume there is some $y\in K_{Q,Y_B},$ $y\notin p(K_{Q',Y_A})$. By Stone-Weierstrass there is some $b\in B$ with $\hat{b}(y)<0$ and $\hat{b}>0$ on $p(K_{Q',Y_A})$. But then $b$, considered as an element from $A$, is positive on $K_{Q',Y_A}$, and thus belongs to $Q'$, by Jacobi's Theorem (\cite[Theorem 4]{J}). But then also $b\in Q$, which contradicts $\hat{b}(y)<0$. So we have shown that $p$ is onto. The rest of the claim is now clear from the above commutative diagram.
\end{proof}

\section{Examples}

\subsection{} Consider $A=\mathbb{R}[t, \root 3 \of t, f],$ with $$f(t)=\left\{ \begin{array}{ccc} t^2 & \mbox{ for } & t\leq 0 \\ \root 3 \of t & \mbox{ for } & t\geq 0.\end{array}\right. $$  We get $m(\mathbb{R})= K_{Q,Y_A}$, if we let $Q$ be the quadratic module of $A$ generated by $-t(f- \root 3 \of t)^2$ and $t(f-t^2)^2$. Thus for any quadratic module $Q'\supseteq Q$ also $m(K_{Q',\mathbb{R}})=K_{Q',Y_A}$. For example, if we take $Q'$ to be generated by $Q$ and $1-t^2$, then $K_{Q',\mathbb{R}}=[-1,1]=:X$. $Q'$ is archimedean. So every function from $A$ that is strictly positive on $X$ belongs to $Q'$.

\subsection{} Let $X=[-1,1]$ and $A=\mathbb{R}[t,\vert t\vert, \chi_{[0,1]}]$. We start with $A_0=\mathbb{R}[t]$ and $Q_0$ the quadratic module generated by $1-t^2$, which is archimedean. Then we adjoin $\vert t \vert = \root 2 \of{ t^2}$ to obtain $A_1$. We have to add $\vert t\vert$ as a generator of $Q_1$. $Q_1$ is again archimedean. Finally we adjoin $\chi_{[0,1]}$ to obtain $A$. We can use Proposition \ref{comp} with $q= t$. We have to add to $Q_1$ the generators $t\chi_{[0,1]}$ and $t(\chi_{[0,1]}-1)$. So $Q$, generated by $1-t^2,\vert t\vert, t\chi_{[0,1]}$ and $-t\chi_{[-1,0)}$ in $A$, is archimedean with $\overline{m(X)}=K_{Q,Y_A}$. For each function $a$ from $A$ that is nonnegative on $[-1,1]$ we have $a+\epsilon \in Q$, for all $\epsilon >0$.

\subsection{} In this example we start with $A_0=\mathbb{R}[\sin t, \cos t],$ $X=\mathbb{R}$ and $Q_0=\sum A_0^2$. As we have seen in  Example \ref{ex1}, $m(X)=K_{Q_0,Y_{A_0}}$ holds. In the next step we adjoin $\chi_S$, where $S=\{x\in X\mid \cos(x)\geq 0\}$. We cannot apply Proposition \ref{comp}, since $X$ is not compact. So we have to check the reqularity condition in Proposition \ref{alinj} (5) for $q=\cos t=y$ on $K_{Q_0,Y_{A_0}}=m(X)= S^1\subseteq\mathbb{R}^2$. This is easily done. So let $Q$ be the quadratic module in $A=\mathbb{R}[\sin t,\cos t, \chi_S]$ generated by $\chi_S \cdot\cos t$ and $(\chi_S -1)\cdot\cos t$. We have $K_{Q,X}=X$ and $\overline{m(X)}=K_{Q,Y_A}$.

\subsection{}\label{counter} We give an example to justify the regularity assumption on $q$ in Proposition \ref{comp}. Therefore let $X=[-2,2]$ with the usual topology, $B=\mathbb{R}[t]$ and $Q$ generated by $2-t,2+t$.  $X$ is compact, all functions from $B$ are continuous, and $m(X)=K_{Q,Y_B}$ holds. Now take $q=-t^2(t+1)(t-1)$. $q$ vanishes at $0$, which is not in the closure of the points where $q$ is negative. We anyhow adjoin $f=\chi_{\{q\geq 0\}}=\chi_{[-1,1]}$ to $B$ an add the generators $qf, q(f-1)$ to $Q$ to obtain $Q'\subseteq A=B[f]$. One computes $Y_A= \mathbb{R}\times\{0,1\}\subseteq\mathbb{R}^2$ and $$K_{Q',Y_A}= \left([-2,-1]\times\{0\}\right) \cup \left([-1,1]\times\{1\}\right) \cup \left([1,2]\times\{0\}\right)  \cup \{(0,0)\}.$$ The isolated point $(0,0)$ of $K_{Q',Y_A}$ reflects precisely the zero of $q$ at $0$. Now one easily checks that $\overline{m(X)}$ equals $K_{Q',Y_A}$ \textit{without} this isolated point. So the regularity assumption on $q$ in Proposition \ref{comp} can not be dropped.

Of course one can add further generators to $Q'$ to solve the problem. For example, if one adds $t^2 + f^2 -\frac12$, then still $K_{Q',X}=X$, and now the point $\{(0,0)\}$ is removed from $K_{Q',Y_A}$. Even simpler would be to start with a different polynomial $q$. For example, $q=1-t^2$ would do.

\subsection{}\label{counter3} A similar example shows that the regularity condition on $q,g,h$ from Proposition \ref{inj} (4) is necessary, and it is not enough to just assume that the piecewisely defined function $f$ is continuous. Indeed consider $X=[-2,2]$, $B=\mathbb{R}[t]$ and $Q$ generated by $2\pm t$. We then take $q=-t^2(t+1)(t-1)$ as above, and $g=1-t^2, h=t^2-1$. Then $f:= g\cdot\chi_{\{ q\geq 0\}} + h\cdot \chi_{\{q<0\}}$ is continuous, although $g$ and $h$ do not coincide at each zero of $q$ in $X$. Let $A=B[f]$ and $Q'$ generated in $A$ by $Q$ and $-q(f-g)^2, q(f-h)^2$.   We leave it to the reader to check that there is again an isolated point in $K_{Q',Y_A}$ that does not belong to $m(X)$ or its closure.

Again the problem arises from taking the wrong polynomial $q$. If we take $q=1-t^2$ in this example, everything works fine.

\subsection{}\label{counter2} We discuss the regularity assumption of  Proposition \ref{alinj} (5). In contrast to Proposition \ref{comp}, it involves the topological space $K_{Q,Y_B}$ instead of $X$ only. But this is clear since $X$ is not assumed to carry any topology. But even if $X$ is a subset of $\mathbb{R}^n$, the result fails if the regularity of the zero set of $q$ is  only fulfilled on $X$ instead of on $K_{Q,Y_B}$; see Example \ref{counter4} below.

One might also wonder why the regularity condition in Proposition \ref{alinj} (5) involves the set where $q$ takes positive values, and not only the set where $q$ is negative, as in Proposition \ref{comp}. We show that this is necessary. Therefore let  $X=[-2,2]\subseteq\mathbb{R}$ and $B=\mathbb{R}[t,\chi_S], $ where $S=[0,2]$. We equip $B$ with the quadratic module generated by $2\pm t,   t\cdot\chi_S, t\cdot(\chi_S-1)$. We find $K_{Q,X}=X$ and  $\overline{m(X)}=K_{Q,Y_B}=\left([-2,0]\times\{0\}\right) \cup \left([0,2]\times\{1\}\right).$

Now let $q=-t^2(t+1)$. We have $T:=\{q\geq 0\}=[-2,-1]\cup\{0\}, $ and we want to adjoin $\chi_T$.  Note that $\hat{q}$ vanishes at three points in $K_{Q,Y_B}$, namely $(-1,0),(0,0)$ and $(0,1)$. All three points can be approximated by points where $\hat{q}$ is negative, but $\hat{q}$ does not take positive values around $(0,0)$ and $(0,1)$.

Let $A=B[\chi_T]$ and $Q'$ the quadratic module generated by $Q, q\cdot\chi_T$ and  $q\cdot(\chi_T-1)$. We have $A\cong\mathbb{R}[x,y,z]$ with the following relations: $$y^2=y, z^2=z, xyz=0.$$ So $Y_A$ consists of three distinct copies of $\mathbb{R},$ one copy for each of the cases $y=0$ and $z=0$; $y=0$ and $z=1$; $y=1$ and $z=0$, and an additional point for the case $y=1$ and $z=1$. $K_{Q',Y_A}$ is the union of the following sets, corresponding to the different cases: \begin{align*} y=0,z=0: & \quad [-1,0] \\ y=0, z=1: & \quad [-2,-1]\cup\{0\} \\ y=1,z=0: &\quad [0,2] \\ y=1,z=1: & \quad \{0\}  \end{align*} Now $m(X)$ is easily computed, again case by case:  \begin{align*} y=0,z=0: & \quad (-1,0) \\ y=0, z=1: & \quad [-2,-1] \\ y=1,z=0: &\quad (0,2] \\ y=1,z=1: & \quad \{0\}  \end{align*}
So we see that the closure of $m(X)$ does not contain the point $\{0\}$ that occurs in the case $y=0,z=1$. So the regularity assumption in Proposition \ref{alinj} (5) on $\hat{q}$ is really necessary, and can in general not be replaced by the same assumption involving the negativity set of $\hat{q}$ only.

\subsection{}\label{counter4} One can ask why the regularity assumption in Proposition \ref{alinj} (5) involves the set $K_{Q,Y_B}$, and not $X$ only. But even if $X\subseteq\mathbb{R}^n$ is given the usual topology, the regularity of $q$ on $X$  instead of $K_{Q,Y_B}$ is not enough for the result. If we consider the same example as in \ref{counter2}, but now take $q=-t(t+1)$ instead of $q=-t^2(t+1)$ (so $T=[-1,0]$), then all zeros of $q$ in $X$ can be approximated both by points where $q$ is negative and positive. However, on $K_{Q,Y_B}$ this is not true, as one easily checks.  And a similar computation as before shows that indeed $\overline{m(X)}\neq K_{Q',Y_A}$.


\begin{thebibliography}{10}
\bibitem{Bro} L. Br\"ocker, \emph{Minimal generation of basic semialgebraic sets}, Rky. Mtn. J. Math. 14 (935-938), 1984.
\bibitem{B} G. W. Brumfiel, \emph{Partially ordered rings and semi-algebraic geometry}, London Math. Soc. Lecture Notes 37, Cambridge University Press, 1979.
\bibitem{CMN} J. Cimpric, M. Marshall, T. Netzer, \emph{Closures of quadratic modules}, Israel J. Math., to appear.
\bibitem{J} T. Jacobi, \emph{A representation theorem for certain partially ordered commutative rings}, Math. Z. 237 (259-273), 2001.
\bibitem{KS} M. Knebusch, C. Scheiderer, \emph{Einf\"uhrung in die reelle Algebra}. Vieweg, Wiesbaden, 1989.
\bibitem{L} J.-B. Lasserre, \emph{Global optimization with polynomials and the problem of moments}, SIAM J. Optim. Volume 11, Issue 3 (796-817), 2001.
\bibitem{L1} J.-B. Lasserre, \emph{Moments, positive polynomials and their applications}, Imperial College Press, London, 2009.
\bibitem{LP} J.-B. Lasserre, M. Putinar, \emph{Positivity and optimization for semi-algebraic functions}, to appear.
\bibitem{M-1} M. Marshall, \emph{Spaces of orderings and abstract real spectra}, Lecture Notes in Mathematics 1636, Springer, 1996.
\bibitem{M0} M. Marshall, \emph{A general representation theorem for partially ordered commutative rings}, Math. Z. 242 (217-225), 2002.
\bibitem{M} M. Marshall, \emph{Positive polynomials and sums of squares}, Mathematical Surveys and Monographs, Vol.146, 2008.
\bibitem{pd} A. Prestel, C. N. Delzell: \textit{Positive polynomials}, Springer, Berlin (2001).
\bibitem{P0} M. Putinar, \emph{Positive polynomials on compact semialgebraic sets}, Indiana Univ. Math. J. (3) 43
(969-984), 1993.
\bibitem{P} M. Putinar, \emph{A Striktpositivestellensatz for measurable functions (corrected version)}, to appear.
\bibitem{S} K. Schm\"udgen, \emph{The $K$-moment problem for compact semi-algebraic sets}, Math. Ann. 289
(203--206), 1991.


\end{thebibliography}
\end{document}